\theoremstyle{plain}
\newtheorem{thm}{Theorem}[section]
\newtheorem{prop}[thm]{Proposition}
\newtheorem{cor}[thm]{Corollary}
\newtheorem{conj}[thm]{Conjecture}
\theoremstyle{definition}
\newtheorem{obs}[thm]{Observation}
\newcommand{\Rc}{{\mathcal{R}}}
\newcommand{\Lc}{{\mathcal{L}}}
\newcommand{\Tc}{{\mathcal{T}}}
\newcommand{\Hc}{{\mathcal{H}}}
\newcommand{\Sc}{{\mathbb{S}}}
\newcommand{\Zc}{{\mathcal{Z}}}
\newcommand{\rleft}{\mathopen{}\mathclose\bgroup\left}
\newcommand{\rright}{\aftergroup\egroup\right}
\title[Ratio of the numbers of odd/even cycles in outerplanar graphs]{The ratio of the numbers of odd and even cycles in outerplanar graphs}
\author[A Higashitani \and N. Matsumoto]{Akihiro Higashitani \and Naoki Matsumoto} 
\address[A Higashitani]{Department of Pure and Applied Mathematics, Graduate School of Information Science and Technology, Osaka University, Suita, Osaka 565-0871, Japan}
\email{higashitani@ist.osaka-u.ac.jp}
\address[N. Matsumoto]{Research Institute for Digital Media and Content, Keio University, Yokohama, Kanagawa 232-0062, Japan}
\email{naoki.matsumo10@gmail.com}
\thanks{}
\subjclass[2020]{Primary: 05C30; Secondary: 05C31}
\keywords{Enumeration, Outerplanar graph, Cycle, Subtree, Claw-free}
\thanks{The first named author is supported by JSPS Grant-in-Aid for Scientific Research (C) 20K03513. 
The second named author is supported by JSPS Grant-in-Aid for Early-Career Scientists 19K14583.}
\begin{document}

\maketitle

\begin{abstract}
In this paper, we investigate the ratio of the numbers of odd and even cycles in outerplanar graphs.
We verify that the ratio generally diverges to infinity as the order of a graph diverges to infinity.
We also give sharp estimations of the ratio for several classes of outerplanar graphs,
and obtain a constant upper bound of the ratio for some of them.
Furthermore, 
we consider similar problems in graphs with some pairs of forbidden subgraphs/minors,
and propose a challenging problem concerning claw-free graphs.
\end{abstract}

\section{Introduction}
How different is the number of odd cycles and that of even cycles in a graph?
(The number of subgraphs means that of {\it distinct} subgraphs; see Subsection~\ref{subsec:1.3} for details.)
This is a very natural and fundamental question in graph theory,
but, as far as we know,
there is no serious study for this question.
Thus, we investigate the ratio of the number of odd cycles and that of even cycles in graphs,
in particular, in outerplanar graphs.

In the introduction,
we first provide a short survey for the study on graph polynomials 
some of which can be applied to evaluate the number of odd cycles and that of even cycles in outerplanar graphs.
Next we summarize main previous studies on the number of cycles in graphs,
and then we describe our results and the organization of this paper.
For fundamental terminologies and notations undefined in this paper, we refer the reader to~\cite{BMGT}.

\subsection{Graph polynomial}

The {\it chromatic polynomial} is the most classical graph polynomial, introduced by Birkhoff (cf.~\cite{BL1946}),
which counts the number of proper colorings of a graph with a given number of colors.
This polynomial can count the number of acyclic orientations of a graph 
by assigning $-1$ to the (unique) variable of the polynomial.
Tutte~\cite{Tutte1954} developed a more general graph polynomial, namely the {\it Tutte polynomial}.
This polynomial can count forests and spanning subgraphs (or spanning forests) 
by suitably setting values of two variables,
and specializes to the chromatic polynomial by assigning some constant to one of two variables.
For counting of spanning subgraphs of a graph,
Farrell~\cite{Farrell1979} introduced a {\it family polynomial} (or {\it $F$-polynomial}\/), 
which counts the number of spanning subgraphs of a graph with each component belonging to the family.

Jamison~\cite{Jami1983} developed a {\it subtree polynomial} which counts all subtrees of a tree,
and well investigated the coefficients of the polynomial~\cite{Jami1984,Jami1987,Jami1990}.
In particular, he discovered the following interesting fact on the difference between the number of odd subtrees and that of even subtrees,
which can be used to 
evaluate the ratio of the number of odd cycles and that of even cycles in some family of outerplanar graphs,
where a graph is {\it odd} (resp.~{\it even}) if the order of the graph is odd (resp.~even).

\begin{thm}[\cite{Jami1987}]\label{thm:subtrees}
For any tree $T$, the number of odd subtrees minus that of even subtrees in $T$ 
%the difference between the number of odd subtrees and that of even cycles in $T$
is equal to the independence number of $T$. 
\end{thm}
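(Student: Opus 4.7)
The plan is to induct on $|V(T)|$. On the independence number side, I would use the classical fact that if $v$ is a leaf of $T$ with neighbor $u$, then $\alpha(T) = 1 + \alpha(T - \{u,v\})$: adjoining $v$ to a maximum independent set of the smaller graph gives one inequality, and the other follows because any maximum independent set of $T$ can be modified to contain $v$ (and hence omit $u$) by a single swap. Setting $T'' := T - \{u,v\}$, it then suffices to show $o(T) - e(T) = 1 + \bigl(o(T'')-e(T'')\bigr)$, since the induction hypothesis applied componentwise (using the additivity of $o-e$ and of $\alpha$ across components of a forest) gives $o(T'')-e(T'') = \alpha(T'')$.

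To handle the subtree side, I would encode the signed count via a local generating function. Root $T$ arbitrarily, let $T_w$ denote the subtree rooted at $w$, and set $P_w(x) := \sum_{S \ni w} x^{|S|}$, where $S$ ranges over subtrees of $T_w$ containing $w$. Decomposing such an $S$ by which children $c_1,\dots,c_k$ of $w$ contribute pieces yields $P_w(x) = x\prod_{i=1}^k \bigl(1 + P_{c_i}(x)\bigr)$. Writing $\tau_w := -P_w(-1)$ converts this into the clean recursion $\tau_w = \prod_{i=1}^k (1-\tau_{c_i})$, and a short induction shows that $\tau_w \in \{0,1\}$ for every $w$. Moreover, every subtree of $T$ has a unique vertex closest to the chosen root (its \emph{top}), and the subtrees with top $w$ are precisely the subtrees of $T_w$ containing $w$, so $o(T)-e(T) = \sum_{w \in V(T)} \tau_w$.

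For the inductive step, re-root $T$ at the leaf $v$. Then $u$ is the unique child of $v$, so the recursion gives $\tau_v = 1 - \tau_u$, i.e.\ the key cancellation $\tau_v + \tau_u = 1$. Every other vertex lies in some component $T_i$ of $T''$, rooted at the former neighbor of $u$ in $T_i$, and $\tau_w$ for such $w$ is the same whether computed in $T$ (rooted at $v$) or in $T_i$, because the ambient subtree $T_w$ is identical in both cases. Summing yields
\[ o(T)-e(T) \;=\; \tau_v + \tau_u + \sum_{i}\sum_{w \in T_i} \tau_w \;=\; 1 + \sum_i \bigl(o(T_i)-e(T_i)\bigr) \;=\; 1 + \bigl(o(T'')-e(T'')\bigr), \]
which combined with the leaf-removal identity for $\alpha$ closes the induction. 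The main obstacle I anticipate is locating the right signed invariant: a direct induction after removing just one leaf introduces an auxiliary "signed count of subtrees through $u$" that is not in general controlled by $\alpha$ of any smaller graph, whereas the telescoping identity $\tau_v+\tau_u=1$, available only after rooting at the pendant vertex, converts the local generating-function output into exactly the global increment of $\alpha$.
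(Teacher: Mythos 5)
The paper does not prove this statement at all: it is quoted from Jamison's 1987 article and used as a black box (notably in the proof of Theorem~\ref{thm:oddfaces}), so there is no in-paper argument to compare yours against. Judged on its own, your proof is correct and complete. The leaf-removal identity $\alpha(T)=1+\alpha(T-\{u,v\})$ and the additivity of both $\alpha$ and $o-e$ over components are standard and correctly justified. The rooted decomposition $o(T)-e(T)=\sum_{w}\tau_w$ is valid because every (nonempty) subtree has a unique vertex of minimal depth: two such vertices would force their strictly shallower least common ancestor into the subtree by connectivity, and the subtrees with top $w$ are exactly the subtrees of $T_w$ through $w$. The recursion $P_w(x)=x\prod_i\bigl(1+P_{c_i}(x)\bigr)$, hence $\tau_w=\prod_i(1-\tau_{c_i})$, is right, and re-rooting at the pendant vertex $v$ does give the key cancellation $\tau_v+\tau_u=1$, while $\tau_w$ for $w\notin\{u,v\}$ is computed entirely inside its component of $T-\{u,v\}$ as you claim, so the telescoping to $1+\bigl(o(T'')-e(T'')\bigr)$ and the componentwise induction close the argument. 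Two minor remarks: the observation $\tau_w\in\{0,1\}$ is never used in the final computation and could be omitted (though it is a pleasant sanity check, consistent with the fact that $\sum_w\tau_w$ must equal the integer $\alpha(T)$); and your top-vertex bijection implicitly counts only nonempty subtrees, which is indeed the convention the theorem requires, since including an empty subtree would shift $o-e$ to $\alpha(T)-1$.
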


The research has been continuing on subtree polynomials; for example, see~\cite{BM2020,VW2010}.
A few years ago, Dod et al.~\cite{DKPT2015} introduced a {\it bipartition polynomial} 
which is a common generalization of several polynomials, namely,
the domination polynomial, the Ising polynomial, the matching polynomial and the cut polynomial.
Note that each cycle in a planar graph $G$ drawn on the plane without edge crossings
corresponds to a cut (i.e., a set of edges which makes a graph disconnected) in the dual graph of $G$,
where the {\it dual graph} of a graph $G$ drawn on the plane is a plane graph
which has a vertex for each face of $G$ and an edge for each pair of faces in $G$ sharing an edge of $G$.
Thus, the cut polynomial can evaluate the number of cycles in a planar graph, 
but it is hard in general to exactly count the number of cycles of a given planar graph
even if we use this polynomial; in fact, this counting problem is $\#$P-complete~\cite{Vija2009}.
For other polynomials and related topics, see a book~\cite{polynomial_book}.
(We can also define the cycle polynomial whose coefficients are the number of cycles. 
However, this is just an analogy of other polynomials, 
and hence, there seems no serious study on this polynomial.\footnote{
For several graphs, the cycle polynomial is determined; 
see \url{https://mathworld.wolfram.com/CyclePolynomial.html}
})

\subsection{The number of cycles}

For a graph $G$, the classical estimation of the number of cycles $\nu(G)$ was given as follows:
$$\mu(G) \leq \nu(G) \leq 2^{\mu(G)} - 1,$$
where $\mu(G)$ is the circuit rank (or the cyclomatic number) of $G$.
Volkmann~\cite{Volk1996} gave another lower bound to $\nu(G)$ using the minimum degree of $G$,
and some authors study graphs $G$ with $ \nu(G) = 2^{\mu(G)} - 1$;
for example, see~\cite{AGT2016,RS2005}.
Counting cycles in graphs has probably begun in 1960s by several groups, e.g.,~\cite{CG1966,HM1971}.
Khomenko and Golovko~\cite{KG1972} gave a formula counting 
the number of cycles of a given length using the adjacency matrix.
However, described as in the previous subsection,
since counting cycles in a given graph is hard in general, 
there are many studies on the number of cycles for particular graph classes; 
for example, see~\cite{AlB2018}.

It is deeply and widely studied to count the number of hamiltonian cycles.
It is well known that determining whether a graph has a hamiltonian cycle is NP-hard~\cite{Karp1972},
and hence, many authors count the number of hamiltonian cycles in prescribed graph classes, e.g., 
bipartite graphs~\cite{Thomassen1996}, regular graphs~\cite{Hay2017} and planar triangulations~\cite{BSV2018}.
For related studies and other topics, see surveys~\cite{Gould_survey, Oz_survey}, 
and for the directed version of such problems, see another survey~\cite{KO_survey}.
Moreover, we refer the reader to a more recent paper~\cite{AAT2020}
which investigates the number of (hamiltonian) cycles in planar graphs with prescribed connectivity.

Despite of those several kinds of results on the number of cycles, 
there seems no result on the difference between the number of odd cycles and that of even cycles as far as we know.

\subsection{Contribution}\label{subsec:1.3}

We first introduce terminologies and notations to mention our results. 
All graphs considered in this paper are finite simple undirected graphs. 
For a graph $G$, we denote by $V(G), E(G)$ and $F(G)$ the set of vertices, edges and faces, respectively. 
Note that we define $F(G)$ only if $G$ is embedded on some surface. 
In this paper, the number of subgraphs means the number of distinct subgraphs,
where two subgraphs $H_1$ and $H_2$ of $G$ are {\it distinct} if $V(H_1) \neq V(H_2)$ or $E(H_1) \neq E(H_2)$.
Let $c_o(G)$ and $c_e(G)$ denote the number of odd cycles and that of even cycles, respectively.

Let $G$ be an outerplanar graph embedded on the plane
so that all vertices lie on the boundary walk of the infinite face,
where a {\it finite} (resp.~{\it infinite}) face of $G$ is 
a face of $G$ with its boundary walk bounding a finite (resp.~infinite) region.
In what follows, an outerplanar graph means one embedded on the plane.
Note that there is only one infinite face for any outerplanar graph.
The boundary walk of the infinite face of $G$ is denoted by $\partial G$
and an edge $e$ of $G$ is {\it diagonal}\/ if $e$ does not belong to $\partial G$.
We say that $\partial G$ is {\it odd} (resp. {\it even}) if the length of $\partial G$ is odd (resp. even). 
When $\partial G$ is an even cycle, we always color $\partial G$ by two colors, black and white. 
In this case, an {\it odd} (resp.~{\it even}) {\it chord} of $G$
is a diagonal of $G$ joining two vertices with the same color (resp.~distinct colors).
When $\partial G$ is a cycle,
the {\it dual tree} of $G$, denoted by $T_G$, is the graph obtained from the dual graph of $G$ 
by deleting the vertex corresponding to the infinite face of $G$.

In this paper, for an outerplanar graph $G$ where $\partial G$ is a cycle, i.e., $G$ is 2-connected, 
we have the following results on $c_o(G)/c_e(G)$ and $c_e(G)/c_o(G)$:

\begin{itemize}
\setlength{\itemsep}{3mm} 

\item
Both $c_o(G)/c_e(G)$ and $c_e(G)/c_o(G)$ diverge to infinity as $|V(G)| \to \infty$ in general,
not depending on the parity of $\partial G$ (Theorem~\ref{thm:exist}).

\item 
If the size of each finite face of an outerplanar graph $G$ is odd and $|F(G)| \geq 3$,
then $\frac{|F(G)|-1}{2c_e(G)} + 1 \leq c_o(G)/c_e(G) \leq \frac{|F(G)|-2}{c_e(G)} + 1$ 
(Theorem~\ref{thm:oddfaces}).

\item If $\partial G$ is even,
then $c_o(G)/c_e(G) \leq k$, where $k$ is the number of odd chords, 
and this bound is sharp (Theorem~\ref{thm:oddchords}).

\item If $\partial G$ is even and the dual tree $T_G$ is a path,
then $c_o(G)/c_e(G) \leq 2$, and this bound is sharp (Proposition~\ref{prop:dualpath}).

\item If the dual tree $T_G$ is a star and there is at least one odd face corresponding to a leaf of $T_G$,
then both $c_o(G)/c_e(G)$ and $c_e(G)/c_o(G)$ converge to $1$ as $|F(G)| \to \infty$
(Proposition~\ref{prop:dualstar}).
\end{itemize}

Note that if a graph $G$ is bipartite, then $c_o(G)=0$ always holds. 
Thus, we deal with only non-bipartite graphs in what follows.
Moreover, if an outerplanar graph $G$ is not 2-connected,
then we can obtain similar results by applying the above results to each block of $G$ (see Section~\ref{sec:remarks}). 

In Section~\ref{sec:forbidden},
we consider conditions for a graph $G$ concerning forbidden subgraphs/minors 
that $c_o(G)/c_e(G)$ or $c_e(G)/c_o(G)$ is bounded by some constant.
Recall that every outerplanar graph 
is characterized as a $K_{2,3}$-minor-free and $K_4$-minor-free graph~\cite[Exercise 10.5.12]{BMGT}.
Our first result (Theorem~\ref{thm:exist}) implies that 
the $K_{2,3}$-minor-free and $K_4$-minor-free condition is not sufficient
to bound $c_o(G)/c_e(G)$ or $c_e(G)/c_o(G)$ by a constant.
Moreover, by application of Wagner's proof~\cite{Wagner},
every 2-connected graph is $K_{2,3}$-minor-free if and only if 
it is either isomorphic to $K_4$ or outerplanar (cf.~\cite{EMOT2016}).
Hence, replacing $K_{2,3}$-minor-free with $K_{1,3}$-free (or, popularly, claw-free),
we show that every $K_{1,3}$-free and $K_4$-minor-free graph is outerplanar,
and we completely characterize the structure of such outerplanar graphs (Theorem~\ref{thm:forbid_claw_k4}).
As a corollary, for any $K_{1,3}$-free and $K_4$-minor-free graph $G$,
both $c_o(G)/c_e(G)$ and $c_e(G)/c_o(G)$ are bounded by a constant unless $G$ is a cycle (Corollary~\ref{cor:claw_k4}).

\subsection{Organization of the paper}

In the next section,
we construct 2-connected outerplanar graphs $G$ such that $c_o(G)/c_e(G)$ or $c_e(G)/c_o(G)$ 
diverges to infinity.
In Section~\ref{sec:bounds},
we show upper/lower bounds of the ratio of the number of odd cycles and that of even cycles
in 2-connected outerplanar graphs.
In Section~\ref{sec:forbidden}, 
we consider the ratio of the number of odd cycles and that of even cycles in graphs 
characterized by forbidden subgraphs/minors regarding to outerplanar graphs.
In the final section, we give concluding remarks and future perspectives.

\section{Outerplanar graphs with many odd/even cycles}\label{sec:graphs}

We first introduce two particular 2-connected outerplanar graphs $G$ 
such that $\partial G$ is odd.

\begin{description}
\item[The graph $\Rc_k$]
Given an odd number $k$ with $k \geq 3$, prepare a cycle $C_k = v_0v_1v_2 \dots v_{k-1}v_0$ of length $k$. 
Prepare $k$ copies of the 4-cycle denoted by $D_0,D_1,\dots,D_{k-1}$.
For each $i \in \{0,1,\dots,k-1\}$,
identify an edge of $D_i$ and $v_{i}v_{i+1}$ where the subscripts are modulo~$k$.
The resulting 2-connected outerplanar graph is denoted by $\Rc_k$; see Figure~\ref{fig:gear}.

\smallskip

\item[The graph $\Lc_m$]
Given an odd number $m$ with $m \geq 3$, prepare a path $P_m = v_0v_1v_2 \dots v_{m-1}$ of order $m$. 
For each $i \in \{0,\dots,(m-3)/2\}$, join $v_i$ and $v_{m-1-i}$.
The resulting 2-connected outerplanar graph is denoted by $\Lc_{(m-1)/2}$; see Figure~\ref{fig:pencil}.

\end{description}

\begin{figure}[htb]
\begin{minipage}{0.5\hsize}
\centering
%WinTpicVersion3.08
\unitlength 0.1in
\begin{picture}( 21.0000, 19.0000)(  9.5000,-24.5000)
% CIRCLE 2 0 0 0
% 4 1800 1000 1760 1030 1760 1020 1760 1020
% 
\special{pn 8}%
\special{sh 1.000}%
\special{ar 1800 1000 50 50  0.0000000 6.2831853}%
% CIRCLE 2 0 0 0
% 4 2200 1000 2160 1030 2160 1020 2160 1020
% 
\special{pn 8}%
\special{sh 1.000}%
\special{ar 2200 1000 50 50  0.0000000 6.2831853}%
% CIRCLE 2 0 0 0
% 4 1400 1400 1360 1430 1360 1420 1360 1420
% 
\special{pn 8}%
\special{sh 1.000}%
\special{ar 1400 1400 50 50  0.0000000 6.2831853}%
% CIRCLE 2 0 0 0
% 4 1800 600 1760 630 1760 620 1760 620
% 
\special{pn 8}%
\special{sh 1.000}%
\special{ar 1800 600 50 50  0.0000000 6.2831853}%
% CIRCLE 2 0 0 0
% 4 2200 600 2160 630 2160 620 2160 620
% 
\special{pn 8}%
\special{sh 1.000}%
\special{ar 2200 600 50 50  0.0000000 6.2831853}%
% CIRCLE 2 0 0 0
% 4 1400 1800 1360 1830 1360 1820 1360 1820
% 
\special{pn 8}%
\special{sh 1.000}%
\special{ar 1400 1800 50 50  0.0000000 6.2831853}%
% CIRCLE 2 0 0 0
% 4 2000 2200 1960 2230 1960 2220 1960 2220
% 
\special{pn 8}%
\special{sh 1.000}%
\special{ar 2000 2200 50 50  0.0000000 6.2831853}%
% CIRCLE 2 0 0 0
% 4 2600 1800 2560 1830 2560 1820 2560 1820
% 
\special{pn 8}%
\special{sh 1.000}%
\special{ar 2600 1800 50 50  0.0000000 6.2831853}%
% CIRCLE 2 0 0 0
% 4 2600 1400 2560 1430 2560 1420 2560 1420
% 
\special{pn 8}%
\special{sh 1.000}%
\special{ar 2600 1400 50 50  0.0000000 6.2831853}%
% CIRCLE 2 0 0 0
% 4 1400 800 1360 830 1360 820 1360 820
% 
\special{pn 8}%
\special{sh 1.000}%
\special{ar 1400 800 50 50  0.0000000 6.2831853}%
% CIRCLE 2 0 0 0
% 4 1200 1000 1160 1030 1160 1020 1160 1020
% 
\special{pn 8}%
\special{sh 1.000}%
\special{ar 1200 1000 50 50  0.0000000 6.2831853}%
% CIRCLE 2 0 0 0
% 4 1000 1400 960 1430 960 1420 960 1420
% 
\special{pn 8}%
\special{sh 1.000}%
\special{ar 1000 1400 50 50  0.0000000 6.2831853}%
% CIRCLE 2 0 0 0
% 4 1000 1800 960 1830 960 1820 960 1820
% 
\special{pn 8}%
\special{sh 1.000}%
\special{ar 1000 1800 50 50  0.0000000 6.2831853}%
% CIRCLE 2 0 0 0
% 4 3000 1800 2960 1830 2960 1820 2960 1820
% 
\special{pn 8}%
\special{sh 1.000}%
\special{ar 3000 1800 50 50  0.0000000 6.2831853}%
% CIRCLE 2 0 0 0
% 4 3000 1400 2960 1430 2960 1420 2960 1420
% 
\special{pn 8}%
\special{sh 1.000}%
\special{ar 3000 1400 50 50  0.0000000 6.2831853}%
% CIRCLE 2 0 0 0
% 4 2800 1000 2760 1030 2760 1020 2760 1020
% 
\special{pn 8}%
\special{sh 1.000}%
\special{ar 2800 1000 50 50  0.0000000 6.2831853}%
% CIRCLE 2 0 0 0
% 4 2600 800 2560 830 2560 820 2560 820
% 
\special{pn 8}%
\special{sh 1.000}%
\special{ar 2600 800 50 50  0.0000000 6.2831853}%
% CIRCLE 2 0 0 0
% 4 1600 2400 1560 2430 1560 2420 1560 2420
% 
\special{pn 8}%
\special{sh 1.000}%
\special{ar 1600 2400 50 50  0.0000000 6.2831853}%
% CIRCLE 2 0 0 0
% 4 2400 2400 2360 2430 2360 2420 2360 2420
% 
\special{pn 8}%
\special{sh 1.000}%
\special{ar 2400 2400 50 50  0.0000000 6.2831853}%
% CIRCLE 2 0 0 0
% 4 2600 2200 2560 2230 2560 2220 2560 2220
% 
\special{pn 8}%
\special{sh 1.000}%
\special{ar 2600 2200 50 50  0.0000000 6.2831853}%
% CIRCLE 2 0 0 0
% 4 1400 2200 1360 2230 1360 2220 1360 2220
% 
\special{pn 8}%
\special{sh 1.000}%
\special{ar 1400 2200 50 50  0.0000000 6.2831853}%
% LINE 2 0 3 0
% 2 2600 1800 2000 2200
% 
\special{pn 8}%
\special{pa 2600 1800}%
\special{pa 2000 2200}%
\special{fp}%
% LINE 2 0 3 0
% 2 2000 2200 1400 1800
% 
\special{pn 8}%
\special{pa 2000 2200}%
\special{pa 1400 1800}%
\special{fp}%
% LINE 2 0 3 0
% 2 1400 1800 1400 1400
% 
\special{pn 8}%
\special{pa 1400 1800}%
\special{pa 1400 1400}%
\special{fp}%
% LINE 2 0 3 0
% 2 1400 1400 1800 1000
% 
\special{pn 8}%
\special{pa 1400 1400}%
\special{pa 1800 1000}%
\special{fp}%
% LINE 2 0 3 0
% 2 1800 1000 2200 1000
% 
\special{pn 8}%
\special{pa 1800 1000}%
\special{pa 2200 1000}%
\special{fp}%
% LINE 2 0 3 0
% 2 2200 1000 2600 1400
% 
\special{pn 8}%
\special{pa 2200 1000}%
\special{pa 2600 1400}%
\special{fp}%
% LINE 2 0 3 0
% 2 2600 1400 2600 1800
% 
\special{pn 8}%
\special{pa 2600 1400}%
\special{pa 2600 1800}%
\special{fp}%
% LINE 2 0 3 0
% 2 2600 1400 2800 1000
% 
\special{pn 8}%
\special{pa 2600 1400}%
\special{pa 2800 1000}%
\special{fp}%
% LINE 2 0 3 0
% 2 2800 1000 2600 800
% 
\special{pn 8}%
\special{pa 2800 1000}%
\special{pa 2600 800}%
\special{fp}%
% LINE 2 0 3 0
% 2 2600 800 2200 1000
% 
\special{pn 8}%
\special{pa 2600 800}%
\special{pa 2200 1000}%
\special{fp}%
% LINE 2 0 3 0
% 2 2200 1000 2200 600
% 
\special{pn 8}%
\special{pa 2200 1000}%
\special{pa 2200 600}%
\special{fp}%
% LINE 2 0 3 0
% 2 2200 600 1800 600
% 
\special{pn 8}%
\special{pa 2200 600}%
\special{pa 1800 600}%
\special{fp}%
% LINE 2 0 3 0
% 2 1800 600 1800 1000
% 
\special{pn 8}%
\special{pa 1800 600}%
\special{pa 1800 1000}%
\special{fp}%
% LINE 2 0 3 0
% 2 1800 1000 1400 800
% 
\special{pn 8}%
\special{pa 1800 1000}%
\special{pa 1400 800}%
\special{fp}%
% LINE 2 0 3 0
% 2 1400 800 1200 1000
% 
\special{pn 8}%
\special{pa 1400 800}%
\special{pa 1200 1000}%
\special{fp}%
% LINE 2 0 3 0
% 2 1200 1000 1400 1400
% 
\special{pn 8}%
\special{pa 1200 1000}%
\special{pa 1400 1400}%
\special{fp}%
% LINE 2 0 3 0
% 2 1400 1400 1000 1400
% 
\special{pn 8}%
\special{pa 1400 1400}%
\special{pa 1000 1400}%
\special{fp}%
% LINE 2 0 3 0
% 2 1000 1400 1000 1800
% 
\special{pn 8}%
\special{pa 1000 1400}%
\special{pa 1000 1800}%
\special{fp}%
% LINE 2 0 3 0
% 2 1000 1800 1400 1800
% 
\special{pn 8}%
\special{pa 1000 1800}%
\special{pa 1400 1800}%
\special{fp}%
% LINE 2 0 3 0
% 2 1400 1800 1400 2200
% 
\special{pn 8}%
\special{pa 1400 1800}%
\special{pa 1400 2200}%
\special{fp}%
% LINE 2 0 3 0
% 2 1400 2200 1600 2400
% 
\special{pn 8}%
\special{pa 1400 2200}%
\special{pa 1600 2400}%
\special{fp}%
% LINE 2 0 3 0
% 2 1600 2400 2000 2200
% 
\special{pn 8}%
\special{pa 1600 2400}%
\special{pa 2000 2200}%
\special{fp}%
% LINE 2 0 3 0
% 2 2000 2200 2400 2400
% 
\special{pn 8}%
\special{pa 2000 2200}%
\special{pa 2400 2400}%
\special{fp}%
% LINE 2 0 3 0
% 2 2400 2400 2600 2200
% 
\special{pn 8}%
\special{pa 2400 2400}%
\special{pa 2600 2200}%
\special{fp}%
% LINE 2 0 3 0
% 2 2600 2200 2600 1800
% 
\special{pn 8}%
\special{pa 2600 2200}%
\special{pa 2600 1800}%
\special{fp}%
% LINE 2 0 3 0
% 2 2600 1800 3000 1800
% 
\special{pn 8}%
\special{pa 2600 1800}%
\special{pa 3000 1800}%
\special{fp}%
% LINE 2 0 3 0
% 2 3000 1800 3000 1400
% 
\special{pn 8}%
\special{pa 3000 1800}%
\special{pa 3000 1400}%
\special{fp}%
% LINE 2 0 3 0
% 2 3000 1400 2600 1400
% 
\special{pn 8}%
\special{pa 3000 1400}%
\special{pa 2600 1400}%
\special{fp}%
% STR 2 0 3 0
% 3 2000 1960 2000 2060 5 0
% $v_0$
\put(20.0000,-20.6000){\makebox(0,0){$v_0$}}%
% STR 2 0 3 0
% 3 1580 1700 1580 1800 5 0
% $v_1$
\put(15.8000,-18.0000){\makebox(0,0){$v_1$}}%
% STR 2 0 3 0
% 3 1580 1300 1580 1400 5 0
% $v_2$
\put(15.8000,-14.0000){\makebox(0,0){$v_2$}}%
% STR 2 0 3 0
% 3 2430 1300 2430 1400 5 0
% $v_5$
\put(24.3000,-14.0000){\makebox(0,0){$v_5$}}%
% STR 2 0 3 0
% 3 2430 1700 2430 1800 5 0
% $v_6$
\put(24.3000,-18.0000){\makebox(0,0){$v_6$}}%
% STR 2 0 3 0
% 3 2200 1040 2200 1140 5 0
% $v_4$
\put(22.0000,-11.4000){\makebox(0,0){$v_4$}}%
% STR 2 0 3 0
% 3 1800 1040 1800 1140 5 0
% $v_3$
\put(18.0000,-11.4000){\makebox(0,0){$v_3$}}%
\end{picture}%
\caption{$\Rc_7$}
\label{fig:gear}
\end{minipage}
\begin{minipage}{0.45\hsize}
\centering
%WinTpicVersion3.08
\unitlength 0.1in
\begin{picture}( 18.7500,  6.8500)(  7.7500,-18.6000)
% CIRCLE 2 0 0 0
% 4 1000 1400 960 1430 960 1420 960 1420
% 
\special{pn 8}%
\special{sh 1.000}%
\special{ar 1000 1400 50 50  0.0000000 6.2831853}%
% CIRCLE 2 0 0 0
% 4 1000 1800 960 1830 960 1820 960 1820
% 
\special{pn 8}%
\special{sh 1.000}%
\special{ar 1000 1800 50 50  0.0000000 6.2831853}%
% LINE 2 0 3 0
% 2 1000 1400 1000 1800
% 
\special{pn 8}%
\special{pa 1000 1400}%
\special{pa 1000 1800}%
\special{fp}%
% CIRCLE 2 0 0 0
% 4 1400 1400 1360 1430 1360 1420 1360 1420
% 
\special{pn 8}%
\special{sh 1.000}%
\special{ar 1400 1400 50 50  0.0000000 6.2831853}%
% CIRCLE 2 0 0 0
% 4 1400 1800 1360 1830 1360 1820 1360 1820
% 
\special{pn 8}%
\special{sh 1.000}%
\special{ar 1400 1800 50 50  0.0000000 6.2831853}%
% CIRCLE 2 0 0 0
% 4 1800 1800 1760 1830 1760 1820 1760 1820
% 
\special{pn 8}%
\special{sh 1.000}%
\special{ar 1800 1800 50 50  0.0000000 6.2831853}%
% CIRCLE 2 0 0 0
% 4 1800 1400 1760 1430 1760 1420 1760 1420
% 
\special{pn 8}%
\special{sh 1.000}%
\special{ar 1800 1400 50 50  0.0000000 6.2831853}%
% CIRCLE 2 0 0 0
% 4 2200 1400 2160 1430 2160 1420 2160 1420
% 
\special{pn 8}%
\special{sh 1.000}%
\special{ar 2200 1400 50 50  0.0000000 6.2831853}%
% CIRCLE 2 0 0 0
% 4 2200 1800 2160 1830 2160 1820 2160 1820
% 
\special{pn 8}%
\special{sh 1.000}%
\special{ar 2200 1800 50 50  0.0000000 6.2831853}%
% CIRCLE 2 0 0 0
% 4 2600 1600 2560 1630 2560 1620 2560 1620
% 
\special{pn 8}%
\special{sh 1.000}%
\special{ar 2600 1600 50 50  0.0000000 6.2831853}%
% LINE 2 0 3 0
% 2 2600 1600 2200 1400
% 
\special{pn 8}%
\special{pa 2600 1600}%
\special{pa 2200 1400}%
\special{fp}%
% LINE 2 0 3 0
% 4 2200 1400 1000 1400 1000 1400 1000 1400
% 
\special{pn 8}%
\special{pa 2200 1400}%
\special{pa 1000 1400}%
\special{fp}%
\special{pa 1000 1400}%
\special{pa 1000 1400}%
\special{fp}%
% LINE 2 0 3 0
% 2 1000 1800 2200 1800
% 
\special{pn 8}%
\special{pa 1000 1800}%
\special{pa 2200 1800}%
\special{fp}%
% LINE 2 0 3 0
% 2 2200 1800 2600 1600
% 
\special{pn 8}%
\special{pa 2200 1800}%
\special{pa 2600 1600}%
\special{fp}%
% LINE 2 0 3 0
% 2 2200 1800 2200 1400
% 
\special{pn 8}%
\special{pa 2200 1800}%
\special{pa 2200 1400}%
\special{fp}%
% LINE 2 0 3 0
% 2 1800 1400 1800 1800
% 
\special{pn 8}%
\special{pa 1800 1400}%
\special{pa 1800 1800}%
\special{fp}%
% LINE 2 0 3 0
% 2 1400 1800 1400 1400
% 
\special{pn 8}%
\special{pa 1400 1800}%
\special{pa 1400 1400}%
\special{fp}%
% STR 2 0 3 0
% 3 1000 1160 1000 1260 5 0
% $v_0$
\put(10.0000,-12.6000){\makebox(0,0){$v_0$}}%
% STR 2 0 3 0
% 3 1400 1160 1400 1260 5 0
% $v_1$
\put(14.0000,-12.6000){\makebox(0,0){$v_1$}}%
% STR 2 0 3 0
% 3 1800 1160 1800 1260 5 0
% $v_2$
\put(18.0000,-12.6000){\makebox(0,0){$v_2$}}%
% STR 2 0 3 0
% 3 2200 1160 2200 1260 5 0
% $v_3$
\put(22.0000,-12.6000){\makebox(0,0){$v_3$}}%
% STR 2 0 3 0
% 3 2600 1360 2600 1460 5 0
% $v_4$
\put(26.0000,-14.6000){\makebox(0,0){$v_4$}}%
% STR 2 0 3 0
% 3 2205 1845 2205 1945 5 0
% $v_5$
\put(22.0500,-19.4500){\makebox(0,0){$v_5$}}%
% STR 2 0 3 0
% 3 1805 1845 1805 1945 5 0
% $v_6$
\put(18.0500,-19.4500){\makebox(0,0){$v_6$}}%
% STR 2 0 3 0
% 3 1405 1845 1405 1945 5 0
% $v_7$
\put(14.0500,-19.4500){\makebox(0,0){$v_7$}}%
% STR 2 0 3 0
% 3 1005 1845 1005 1945 5 0
% $v_8$
\put(10.0500,-19.4500){\makebox(0,0){$v_8$}}%
\end{picture}%
\caption{$\Lc_4$}
\label{fig:pencil}
\end{minipage}
\end{figure}

\begin{prop}\label{prop:odd_exist} \verb||
\begin{enumerate}
\item $c_o(\Rc_k)/c_e(\Rc_k)$ diverges to infinity as $k \to \infty$.
\item $c_e(\Lc_m)/c_o(\Lc_m)$ diverges to infinity as $m \to \infty$.
\end{enumerate}
\end{prop}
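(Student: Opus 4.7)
The plan is to enumerate the cycles of $\Rc_k$ and $\Lc_m$ using the standard bijection, valid for every $2$-connected outerplanar graph $G$, between cycles of $G$ and nonempty subtrees of the dual tree $T_G$: a subtree $S \subseteq V(T_G)$ corresponds to the boundary cycle of the simply connected region $\bigcup_{F \in S} F$, whose length equals $\sum_{F \in S}|\partial F| - 2(|S|-1)$, since the edges shared between adjacent faces of $S$ are precisely the $|S|-1$ edges of the subtree and each such edge is removed twice when passing from the disjoint union of face boundaries to the boundary of the union.

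For part (1), the finite faces of $\Rc_k$ are the central $k$-gon $C_k$ together with the $k$ quadrilaterals $D_0,\ldots,D_{k-1}$, each $D_i$ sharing the single edge $v_iv_{i+1}$ with $C_k$ and sharing no edge with any other $D_j$. Hence $T_{\Rc_k}$ is the star $K_{1,k}$ centered at $C_k$, whose nonempty subtrees split into the $k$ singletons $\{D_i\}$ and the $2^k$ subtrees $\{C_k\}\cup T$ for $T\subseteq\{D_0,\ldots,D_{k-1}\}$. The length formula yields $4$ for a singleton leaf (even), and $k+4|T|-2|T|=k+2|T|$ for $\{C_k\}\cup T$, which is odd because $k$ is odd. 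Therefore $c_e(\Rc_k)=k$ and $c_o(\Rc_k)=2^k$, so $c_o(\Rc_k)/c_e(\Rc_k)=2^k/k\to\infty$.

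For part (2), set $n=(m-1)/2$. The finite faces of $\Lc_m$ are the quadrilaterals $F_k=v_{k-1}v_kv_{2n-k}v_{2n-k+1}$ for $k=1,\ldots,n-1$ and the innermost triangle $F_n=v_{n-1}v_nv_{n+1}$. Consecutive faces $F_k$ and $F_{k+1}$ share precisely the chord $v_kv_{2n-k}$, so $T_{\Lc_m}$ is the path $F_1-F_2-\cdots-F_n$, whose nonempty subtrees are the $n(n+1)/2$ subpaths $\{F_i,\ldots,F_j\}$. The length formula gives $2(j-i)+4$ when $j<n$ (all quadrilaterals, even) and $2(n-i)+3$ when $j=n$ (odd, because the triangle contributes an odd number to the sum). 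Thus $c_o(\Lc_m)$ counts the $n$ subpaths that contain $F_n$, while $c_e(\Lc_m)$ counts the $n(n-1)/2$ subpaths of $F_1,\ldots,F_{n-1}$, yielding $c_e(\Lc_m)/c_o(\Lc_m)=(n-1)/2\to\infty$ as $m\to\infty$.

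The only nontrivial step is the cycle–subtree correspondence and its length formula; this is essentially the Jordan curve theorem applied to a $2$-connected outerplanar embedding. If a self-contained argument is preferred, both counts can be recovered by direct analysis. For $\Rc_k$ the two degree-two vertices inside each $D_i$ force any cycle either to coincide with $D_i$ or to visit every spine vertex $v_0,\ldots,v_{k-1}$ and to choose, independently at each edge $v_iv_{i+1}$, between the direct edge and the length-three detour through $D_i$. For $\Lc_m$ a cycle is determined by the outermost and innermost chord it uses together with the binary choice of whether to traverse the path $v_{n-1}v_nv_{n+1}$ of the triangular face. Either route produces the same values $2^k$, $k$ and $n$, $n(n-1)/2$ derived above.
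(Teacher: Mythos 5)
Your proof is correct, and it reaches exactly the counts the paper uses ($c_o(\Rc_k)=2^k$, $c_e(\Rc_k)=k$, and for the ladder the number of odd cycles equals the number of chords while the even cycles are the two-chord subpaths), but your primary route is genuinely different from the paper's. The paper argues directly and locally: for $\Rc_k$ it observes that detouring through a $4$-cycle $D_i$ preserves parity, so the odd cycles are the $2^k$ perturbations of $C_k$ and the even cycles are the $k$ quadrilaterals; for $\Lc_m$ it notes that every odd cycle must traverse the two path edges at the innermost triangle and every even cycle uses exactly two chords. You instead invoke the general bijection between cycles of a $2$-connected outerplanar graph and nonempty subtrees of its dual tree, together with the length formula $\sum_{F\in S}|\partial F|-2(|S|-1)$, and then just enumerate subtrees of a star and of a path. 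This is the same machinery the paper deploys later (in the proof of Theorem~\ref{thm:oddfaces}, via Jamison's theorem), so your argument is arguably more uniform and extends immediately to other dual-tree shapes, at the cost of having to justify the bijection and the length formula — which you do correctly, and you also supply the paper's direct argument as a fallback. Two small remarks: first, your indexing of $\Lc_m$ (setting $n=(m-1)/2$ and placing the triangle at $F_n$) treats the subscript as the order of the underlying path, whereas the paper's construction makes $\Lc_m$ the graph on $2m+1$ vertices with $m$ chords, giving $c_o=m$ and $c_e=\binom{m}{2}$; this is only a reparametrization and both versions of the ratio diverge, but you should align with the paper's convention. Second, in your closing sketch for $\Lc_m$ the phrase ``determined by the outermost and innermost chord it uses'' is slightly off for the odd cycles, which use exactly one chord; your main dual-tree argument is unaffected.
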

\begin{proof}
(1) The number of even cycles in $\Rc_k$ is exactly $k$ (i.e., equal to the number of 4-cycles).
On the other hand, since whether an odd cycle passes an edge shared by $C_k$ and a 4-cycle
does not change the parity of the length of the cycle,
we see that the number of odd cycles is equal to $2^k$. 
Hence, $c_o(\Rc_k)/c_e(\Rc_k) = 2^k/k \to \infty$ as $k \to \infty$.

\medskip
\noindent
(2) Since every odd cycle passes $v_{m-1}v_{m}v_{m+1}$,
the number of odd cycles in $\Lc_m$ is equal to $m$ 
(i.e., equal to the number of added edges in the construction).
On the other hand, since every even cycle passes exactly two added edges,
the number of even cycles is equal to $\binom{m}{2}$. 
Hence, $c_e(\Lc_m)/c_o(\Lc_m) = \binom{m}{2}/m \to \infty$ as $m \to \infty$.
\end{proof}

Next we show a similar result as above for $2$-connected outerplanar graphs $G$ such that $\partial G$ is even.
Let $q \equiv 0 \pmod{4}$ be a positive integer.
Let $\Hc_q$ be the graph obtained from two copies of $\Lc_{q/2}$ 
by identifying two $v_{q/2-1}v_{q/2}$'s of them,
where the labels are as in the construction of $\Lc_{q/2}$; see Figure~\ref{fig:pens}.
Let $\Tc_n$ be the graph obtained from $\Rc_n$ and $\Lc_n$ for some odd $n \geq 3$
by identifying an edge $u_2u_3$ of a 4-cycle $u_0u_1u_2u_3u_0$ in $\Rc_n$ with $u_0u_1 \in E(C_n)$
and an edge $v_0v_{2n}$, where the labels are as in the construction of $\Lc_n$; see Figure~\ref{fig:combin}.
Note that $\Hc_q$ and $\Tc_n$ are $2$-connected non-bipartite outerplanar graphs
and the boundary cycle of each infinite face is even. Moreover, $\Hc_q$ has exactly one odd chord and $\Lc_n$ has exactly $(n+1)$ odd chords. 

\begin{figure}[htb]
\centering
%WinTpicVersion3.08
\unitlength 0.1in
\begin{picture}( 29.0000,  7.0000)(  9.5000,-18.5000)
% CIRCLE 2 0 0 0
% 4 1000 1800 960 1830 960 1820 960 1820
% 
\special{pn 8}%
\special{sh 1.000}%
\special{ar 1000 1800 50 50  0.0000000 6.2831853}%
% LINE 2 0 3 0
% 2 1000 1400 1000 1800
% 
\special{pn 8}%
\special{pa 1000 1400}%
\special{pa 1000 1800}%
\special{fp}%
% CIRCLE 2 0 0 0
% 4 1400 1400 1360 1430 1360 1420 1360 1420
% 
\special{pn 8}%
\special{sh 1.000}%
\special{ar 1400 1400 50 50  0.0000000 6.2831853}%
% CIRCLE 2 0 0 0
% 4 1800 1800 1760 1830 1760 1820 1760 1820
% 
\special{pn 8}%
\special{sh 1.000}%
\special{ar 1800 1800 50 50  0.0000000 6.2831853}%
% LINE 2 0 3 0
% 4 2200 1400 1000 1400 1000 1400 1000 1400
% 
\special{pn 8}%
\special{pa 2200 1400}%
\special{pa 1000 1400}%
\special{fp}%
\special{pa 1000 1400}%
\special{pa 1000 1400}%
\special{fp}%
% LINE 2 0 3 0
% 2 1000 1800 2200 1800
% 
\special{pn 8}%
\special{pa 1000 1800}%
\special{pa 2200 1800}%
\special{fp}%
% LINE 2 0 3 0
% 2 2200 1800 2600 1600
% 
\special{pn 8}%
\special{pa 2200 1800}%
\special{pa 2600 1600}%
\special{fp}%
% LINE 2 0 3 0
% 2 2200 1800 2200 1400
% 
\special{pn 8}%
\special{pa 2200 1800}%
\special{pa 2200 1400}%
\special{fp}%
% LINE 2 0 3 0
% 2 1800 1400 1800 1800
% 
\special{pn 8}%
\special{pa 1800 1400}%
\special{pa 1800 1800}%
\special{fp}%
% LINE 2 0 3 0
% 2 1400 1800 1400 1400
% 
\special{pn 8}%
\special{pa 1400 1800}%
\special{pa 1400 1400}%
\special{fp}%
% CIRCLE 2 0 0 0
% 4 3800 1200 3840 1170 3840 1180 3840 1180
% 
\special{pn 8}%
\special{sh 1.000}%
\special{ar 3800 1200 50 50  0.0000000 6.2831853}%
% LINE 2 0 3 0
% 2 3800 1600 3800 1200
% 
\special{pn 8}%
\special{pa 3800 1600}%
\special{pa 3800 1200}%
\special{fp}%
% CIRCLE 2 0 0 0
% 4 3400 1600 3440 1570 3440 1580 3440 1580
% 
\special{pn 8}%
\special{sh 1.000}%
\special{ar 3400 1600 50 50  0.0000000 6.2831853}%
% CIRCLE 2 0 0 0
% 4 3000 1200 3040 1170 3040 1180 3040 1180
% 
\special{pn 8}%
\special{sh 1.000}%
\special{ar 3000 1200 50 50  0.0000000 6.2831853}%
% CIRCLE 2 0 0 0
% 4 2600 1600 2640 1570 2640 1580 2640 1580
% 
\special{pn 8}%
\special{sh 1.000}%
\special{ar 2600 1600 50 50  0.0000000 6.2831853}%
% CIRCLE 2 0 0 0
% 4 2200 1400 2240 1370 2240 1380 2240 1380
% 
\special{pn 8}%
\special{sh 1.000}%
\special{ar 2200 1400 50 50  0.0000000 6.2831853}%
% LINE 0 0 3 0
% 2 2200 1400 2600 1600
% 
\special{pn 20}%
\special{pa 2200 1400}%
\special{pa 2600 1600}%
\special{fp}%
% LINE 2 0 3 0
% 4 2600 1600 3800 1600 3800 1600 3800 1600
% 
\special{pn 8}%
\special{pa 2600 1600}%
\special{pa 3800 1600}%
\special{fp}%
\special{pa 3800 1600}%
\special{pa 3800 1600}%
\special{fp}%
% LINE 2 0 3 0
% 2 3800 1200 2600 1200
% 
\special{pn 8}%
\special{pa 3800 1200}%
\special{pa 2600 1200}%
\special{fp}%
% LINE 2 0 3 0
% 2 2600 1200 2200 1400
% 
\special{pn 8}%
\special{pa 2600 1200}%
\special{pa 2200 1400}%
\special{fp}%
% LINE 2 0 3 0
% 2 2600 1200 2600 1600
% 
\special{pn 8}%
\special{pa 2600 1200}%
\special{pa 2600 1600}%
\special{fp}%
% LINE 2 0 3 0
% 2 3000 1600 3000 1200
% 
\special{pn 8}%
\special{pa 3000 1600}%
\special{pa 3000 1200}%
\special{fp}%
% LINE 2 0 3 0
% 2 3400 1200 3400 1600
% 
\special{pn 8}%
\special{pa 3400 1200}%
\special{pa 3400 1600}%
\special{fp}%
% CIRCLE 2 0 2 0
% 4 3800 1600 3840 1570 3840 1580 3840 1580
% 
\special{pn 8}%
\special{sh 0}%
\special{ar 3800 1600 50 50  0.0000000 6.2831853}%
% CIRCLE 2 0 2 0
% 4 3400 1200 3440 1170 3440 1180 3440 1180
% 
\special{pn 8}%
\special{sh 0}%
\special{ar 3400 1200 50 50  0.0000000 6.2831853}%
% CIRCLE 2 0 2 0
% 4 3000 1600 3040 1570 3040 1580 3040 1580
% 
\special{pn 8}%
\special{sh 0}%
\special{ar 3000 1600 50 50  0.0000000 6.2831853}%
% CIRCLE 2 0 2 0
% 4 2600 1200 2640 1170 2640 1180 2640 1180
% 
\special{pn 8}%
\special{sh 0}%
\special{ar 2600 1200 50 50  0.0000000 6.2831853}%
% CIRCLE 2 0 2 0
% 4 2200 1800 2240 1770 2240 1780 2240 1780
% 
\special{pn 8}%
\special{sh 0}%
\special{ar 2200 1800 50 50  0.0000000 6.2831853}%
% CIRCLE 2 0 2 0
% 4 1800 1400 1840 1370 1840 1380 1840 1380
% 
\special{pn 8}%
\special{sh 0}%
\special{ar 1800 1400 50 50  0.0000000 6.2831853}%
% CIRCLE 2 0 2 0
% 4 1400 1800 1440 1770 1440 1780 1440 1780
% 
\special{pn 8}%
\special{sh 0}%
\special{ar 1400 1800 50 50  0.0000000 6.2831853}%
% CIRCLE 2 0 2 0
% 4 1000 1400 1040 1370 1040 1380 1040 1380
% 
\special{pn 8}%
\special{sh 0}%
\special{ar 1000 1400 50 50  0.0000000 6.2831853}%
\end{picture}%
\caption{$\Hc_8$; a bold line denotes the identified edge.}
\label{fig:pens}
\end{figure}
\begin{figure}[htb]
\centering
%WinTpicVersion3.08
\unitlength 0.1in
\begin{picture}( 49.0000, 19.0000)(  9.5000,-24.5000)
% CIRCLE 2 0 0 0
% 4 2200 1000 2160 1030 2160 1020 2160 1020
% 
\special{pn 8}%
\special{sh 1.000}%
\special{ar 2200 1000 50 50  0.0000000 6.2831853}%
% CIRCLE 2 0 0 0
% 4 1400 1400 1360 1430 1360 1420 1360 1420
% 
\special{pn 8}%
\special{sh 1.000}%
\special{ar 1400 1400 50 50  0.0000000 6.2831853}%
% CIRCLE 2 0 0 0
% 4 1800 600 1760 630 1760 620 1760 620
% 
\special{pn 8}%
\special{sh 1.000}%
\special{ar 1800 600 50 50  0.0000000 6.2831853}%
% CIRCLE 2 0 0 0
% 4 2000 2200 1960 2230 1960 2220 1960 2220
% 
\special{pn 8}%
\special{sh 1.000}%
\special{ar 2000 2200 50 50  0.0000000 6.2831853}%
% CIRCLE 2 0 0 0
% 4 1400 800 1360 830 1360 820 1360 820
% 
\special{pn 8}%
\special{sh 1.000}%
\special{ar 1400 800 50 50  0.0000000 6.2831853}%
% CIRCLE 2 0 0 0
% 4 1000 1800 960 1830 960 1820 960 1820
% 
\special{pn 8}%
\special{sh 1.000}%
\special{ar 1000 1800 50 50  0.0000000 6.2831853}%
% CIRCLE 2 0 0 0
% 4 3000 1800 2960 1830 2960 1820 2960 1820
% 
\special{pn 8}%
\special{sh 1.000}%
\special{ar 3000 1800 50 50  0.0000000 6.2831853}%
% CIRCLE 2 0 0 0
% 4 3000 1400 2960 1430 2960 1420 2960 1420
% 
\special{pn 8}%
\special{sh 1.000}%
\special{ar 3000 1400 50 50  0.0000000 6.2831853}%
% CIRCLE 2 0 0 0
% 4 2800 1000 2760 1030 2760 1020 2760 1020
% 
\special{pn 8}%
\special{sh 1.000}%
\special{ar 2800 1000 50 50  0.0000000 6.2831853}%
% CIRCLE 2 0 0 0
% 4 2600 2200 2560 2230 2560 2220 2560 2220
% 
\special{pn 8}%
\special{sh 1.000}%
\special{ar 2600 2200 50 50  0.0000000 6.2831853}%
% CIRCLE 2 0 0 0
% 4 1400 2200 1360 2230 1360 2220 1360 2220
% 
\special{pn 8}%
\special{sh 1.000}%
\special{ar 1400 2200 50 50  0.0000000 6.2831853}%
% LINE 2 0 3 0
% 2 2600 1800 2000 2200
% 
\special{pn 8}%
\special{pa 2600 1800}%
\special{pa 2000 2200}%
\special{fp}%
% LINE 2 0 3 0
% 2 2000 2200 1400 1800
% 
\special{pn 8}%
\special{pa 2000 2200}%
\special{pa 1400 1800}%
\special{fp}%
% LINE 2 0 3 0
% 2 1400 1800 1400 1400
% 
\special{pn 8}%
\special{pa 1400 1800}%
\special{pa 1400 1400}%
\special{fp}%
% LINE 2 0 3 0
% 2 1400 1400 1800 1000
% 
\special{pn 8}%
\special{pa 1400 1400}%
\special{pa 1800 1000}%
\special{fp}%
% LINE 2 0 3 0
% 2 1800 1000 2200 1000
% 
\special{pn 8}%
\special{pa 1800 1000}%
\special{pa 2200 1000}%
\special{fp}%
% LINE 2 0 3 0
% 2 2200 1000 2600 1400
% 
\special{pn 8}%
\special{pa 2200 1000}%
\special{pa 2600 1400}%
\special{fp}%
% LINE 2 0 3 0
% 2 2600 1400 2600 1800
% 
\special{pn 8}%
\special{pa 2600 1400}%
\special{pa 2600 1800}%
\special{fp}%
% LINE 2 0 3 0
% 2 2600 1400 2800 1000
% 
\special{pn 8}%
\special{pa 2600 1400}%
\special{pa 2800 1000}%
\special{fp}%
% LINE 2 0 3 0
% 2 2800 1000 2600 800
% 
\special{pn 8}%
\special{pa 2800 1000}%
\special{pa 2600 800}%
\special{fp}%
% LINE 2 0 3 0
% 2 2600 800 2200 1000
% 
\special{pn 8}%
\special{pa 2600 800}%
\special{pa 2200 1000}%
\special{fp}%
% LINE 2 0 3 0
% 2 2200 1000 2200 600
% 
\special{pn 8}%
\special{pa 2200 1000}%
\special{pa 2200 600}%
\special{fp}%
% LINE 2 0 3 0
% 2 2200 600 1800 600
% 
\special{pn 8}%
\special{pa 2200 600}%
\special{pa 1800 600}%
\special{fp}%
% LINE 2 0 3 0
% 2 1800 600 1800 1000
% 
\special{pn 8}%
\special{pa 1800 600}%
\special{pa 1800 1000}%
\special{fp}%
% LINE 2 0 3 0
% 2 1800 1000 1400 800
% 
\special{pn 8}%
\special{pa 1800 1000}%
\special{pa 1400 800}%
\special{fp}%
% LINE 2 0 3 0
% 2 1400 800 1200 1000
% 
\special{pn 8}%
\special{pa 1400 800}%
\special{pa 1200 1000}%
\special{fp}%
% LINE 2 0 3 0
% 2 1200 1000 1400 1400
% 
\special{pn 8}%
\special{pa 1200 1000}%
\special{pa 1400 1400}%
\special{fp}%
% LINE 2 0 3 0
% 2 1400 1400 1000 1400
% 
\special{pn 8}%
\special{pa 1400 1400}%
\special{pa 1000 1400}%
\special{fp}%
% LINE 2 0 3 0
% 2 1000 1400 1000 1800
% 
\special{pn 8}%
\special{pa 1000 1400}%
\special{pa 1000 1800}%
\special{fp}%
% LINE 2 0 3 0
% 2 1000 1800 1400 1800
% 
\special{pn 8}%
\special{pa 1000 1800}%
\special{pa 1400 1800}%
\special{fp}%
% LINE 2 0 3 0
% 2 1400 1800 1400 2200
% 
\special{pn 8}%
\special{pa 1400 1800}%
\special{pa 1400 2200}%
\special{fp}%
% LINE 2 0 3 0
% 2 1400 2200 1600 2400
% 
\special{pn 8}%
\special{pa 1400 2200}%
\special{pa 1600 2400}%
\special{fp}%
% LINE 2 0 3 0
% 2 1600 2400 2000 2200
% 
\special{pn 8}%
\special{pa 1600 2400}%
\special{pa 2000 2200}%
\special{fp}%
% LINE 2 0 3 0
% 2 2000 2200 2400 2400
% 
\special{pn 8}%
\special{pa 2000 2200}%
\special{pa 2400 2400}%
\special{fp}%
% LINE 2 0 3 0
% 2 2400 2400 2600 2200
% 
\special{pn 8}%
\special{pa 2400 2400}%
\special{pa 2600 2200}%
\special{fp}%
% LINE 2 0 3 0
% 2 2600 2200 2600 1800
% 
\special{pn 8}%
\special{pa 2600 2200}%
\special{pa 2600 1800}%
\special{fp}%
% LINE 2 0 3 0
% 2 2600 1800 3000 1800
% 
\special{pn 8}%
\special{pa 2600 1800}%
\special{pa 3000 1800}%
\special{fp}%
% LINE 0 0 3 0
% 2 3000 1800 3000 1400
% 
\special{pn 20}%
\special{pa 3000 1800}%
\special{pa 3000 1400}%
\special{fp}%
% LINE 2 0 3 0
% 2 3000 1400 2600 1400
% 
\special{pn 8}%
\special{pa 3000 1400}%
\special{pa 2600 1400}%
\special{fp}%
% LINE 2 0 3 0
% 2 3400 1400 3400 1800
% 
\special{pn 8}%
\special{pa 3400 1400}%
\special{pa 3400 1800}%
\special{fp}%
% LINE 2 0 3 0
% 2 3800 1800 3800 1400
% 
\special{pn 8}%
\special{pa 3800 1800}%
\special{pa 3800 1400}%
\special{fp}%
% LINE 2 0 3 0
% 2 4200 1400 4200 1800
% 
\special{pn 8}%
\special{pa 4200 1400}%
\special{pa 4200 1800}%
\special{fp}%
% LINE 2 0 3 0
% 2 4600 1800 4600 1400
% 
\special{pn 8}%
\special{pa 4600 1800}%
\special{pa 4600 1400}%
\special{fp}%
% LINE 2 0 3 0
% 2 5000 1400 5000 1800
% 
\special{pn 8}%
\special{pa 5000 1400}%
\special{pa 5000 1800}%
\special{fp}%
% LINE 2 0 3 0
% 2 5400 1800 5400 1400
% 
\special{pn 8}%
\special{pa 5400 1800}%
\special{pa 5400 1400}%
\special{fp}%
% LINE 2 0 3 0
% 2 3000 1400 5400 1400
% 
\special{pn 8}%
\special{pa 3000 1400}%
\special{pa 5400 1400}%
\special{fp}%
% LINE 2 0 3 0
% 2 5400 1400 5800 1600
% 
\special{pn 8}%
\special{pa 5400 1400}%
\special{pa 5800 1600}%
\special{fp}%
% LINE 2 0 3 0
% 2 5800 1600 5400 1800
% 
\special{pn 8}%
\special{pa 5800 1600}%
\special{pa 5400 1800}%
\special{fp}%
% LINE 2 0 3 0
% 2 5400 1800 3000 1800
% 
\special{pn 8}%
\special{pa 5400 1800}%
\special{pa 3000 1800}%
\special{fp}%
% CIRCLE 2 0 0 0
% 4 3800 1800 3760 1830 3760 1820 3760 1820
% 
\special{pn 8}%
\special{sh 1.000}%
\special{ar 3800 1800 50 50  0.0000000 6.2831853}%
% CIRCLE 2 0 0 0
% 4 3800 1400 3760 1430 3760 1420 3760 1420
% 
\special{pn 8}%
\special{sh 1.000}%
\special{ar 3800 1400 50 50  0.0000000 6.2831853}%
% CIRCLE 2 0 0 0
% 4 4600 1800 4560 1830 4560 1820 4560 1820
% 
\special{pn 8}%
\special{sh 1.000}%
\special{ar 4600 1800 50 50  0.0000000 6.2831853}%
% CIRCLE 2 0 0 0
% 4 4600 1400 4560 1430 4560 1420 4560 1420
% 
\special{pn 8}%
\special{sh 1.000}%
\special{ar 4600 1400 50 50  0.0000000 6.2831853}%
% CIRCLE 2 0 0 0
% 4 5400 1800 5360 1830 5360 1820 5360 1820
% 
\special{pn 8}%
\special{sh 1.000}%
\special{ar 5400 1800 50 50  0.0000000 6.2831853}%
% CIRCLE 2 0 0 0
% 4 5400 1400 5360 1430 5360 1420 5360 1420
% 
\special{pn 8}%
\special{sh 1.000}%
\special{ar 5400 1400 50 50  0.0000000 6.2831853}%
% CIRCLE 2 0 2 0
% 4 2600 1800 2560 1830 2560 1820 2560 1820
% 
\special{pn 8}%
\special{sh 0}%
\special{ar 2600 1800 50 50  0.0000000 6.2831853}%
% CIRCLE 2 0 2 0
% 4 2600 1400 2560 1430 2560 1420 2560 1420
% 
\special{pn 8}%
\special{sh 0}%
\special{ar 2600 1400 50 50  0.0000000 6.2831853}%
% CIRCLE 2 0 2 0
% 4 3400 1400 3360 1430 3360 1420 3360 1420
% 
\special{pn 8}%
\special{sh 0}%
\special{ar 3400 1400 50 50  0.0000000 6.2831853}%
% CIRCLE 2 0 2 0
% 4 3400 1800 3360 1830 3360 1820 3360 1820
% 
\special{pn 8}%
\special{sh 0}%
\special{ar 3400 1800 50 50  0.0000000 6.2831853}%
% CIRCLE 2 0 2 0
% 4 4200 1800 4160 1830 4160 1820 4160 1820
% 
\special{pn 8}%
\special{sh 0}%
\special{ar 4200 1800 50 50  0.0000000 6.2831853}%
% CIRCLE 2 0 2 0
% 4 4200 1400 4160 1430 4160 1420 4160 1420
% 
\special{pn 8}%
\special{sh 0}%
\special{ar 4200 1400 50 50  0.0000000 6.2831853}%
% CIRCLE 2 0 2 0
% 4 5000 1400 4960 1430 4960 1420 4960 1420
% 
\special{pn 8}%
\special{sh 0}%
\special{ar 5000 1400 50 50  0.0000000 6.2831853}%
% CIRCLE 2 0 2 0
% 4 5000 1800 4960 1830 4960 1820 4960 1820
% 
\special{pn 8}%
\special{sh 0}%
\special{ar 5000 1800 50 50  0.0000000 6.2831853}%
% CIRCLE 2 0 2 0
% 4 5800 1600 5760 1630 5760 1620 5760 1620
% 
\special{pn 8}%
\special{sh 0}%
\special{ar 5800 1600 50 50  0.0000000 6.2831853}%
% CIRCLE 2 0 2 0
% 4 2600 800 2560 830 2560 820 2560 820
% 
\special{pn 8}%
\special{sh 0}%
\special{ar 2600 800 50 50  0.0000000 6.2831853}%
% CIRCLE 2 0 2 0
% 4 2200 600 2160 630 2160 620 2160 620
% 
\special{pn 8}%
\special{sh 0}%
\special{ar 2200 600 50 50  0.0000000 6.2831853}%
% CIRCLE 2 0 2 0
% 4 1800 1000 1760 1030 1760 1020 1760 1020
% 
\special{pn 8}%
\special{sh 0}%
\special{ar 1800 1000 50 50  0.0000000 6.2831853}%
% CIRCLE 2 0 2 0
% 4 1200 1000 1160 1030 1160 1020 1160 1020
% 
\special{pn 8}%
\special{sh 0}%
\special{ar 1200 1000 50 50  0.0000000 6.2831853}%
% CIRCLE 2 0 2 0
% 4 1000 1400 960 1430 960 1420 960 1420
% 
\special{pn 8}%
\special{sh 0}%
\special{ar 1000 1400 50 50  0.0000000 6.2831853}%
% CIRCLE 2 0 2 0
% 4 1400 1800 1360 1830 1360 1820 1360 1820
% 
\special{pn 8}%
\special{sh 0}%
\special{ar 1400 1800 50 50  0.0000000 6.2831853}%
% CIRCLE 2 0 2 0
% 4 1600 2400 1560 2430 1560 2420 1560 2420
% 
\special{pn 8}%
\special{sh 0}%
\special{ar 1600 2400 50 50  0.0000000 6.2831853}%
% CIRCLE 2 0 2 0
% 4 2400 2400 2360 2430 2360 2420 2360 2420
% 
\special{pn 8}%
\special{sh 0}%
\special{ar 2400 2400 50 50  0.0000000 6.2831853}%
\end{picture}%
\caption{$\Tc_7$; a bold line denotes the identified edge.}
\label{fig:combin}
\end{figure}

To show the desired claim, we prepare the following useful observation.
For a graph $G$ and an edge $f \in E(G)$, 
let $c_o(G,f)$ and $c_e(G,f)$ denote the number of odd cycles and 
that of even cycles passing $f$ in $G$, respectively.

\begin{obs}\label{obs:addedge}
Let $G_1$ and $G_2$ be graphs, and let $e_1$ and $e_2$ be edges of $G_1$ and $G_2$, respectively.
Let $G$ be the graph obtained from $G_1$ and $G_2$ by identifying $e_1$ and $e_2$. 
Then the following equalities hold:
\begin{align}
c_o(G) &= c_o(G_1) + c_o(G_2) + c_o(G_1,e_1)c_e(G_2,e_2) + c_e(G_1,e_1)c_o(G_2,e_2) \label{eq01}\\
c_e(G) &= c_e(G_1) + c_e(G_2) + c_o(G_1,e_1)c_o(G_2,e_2) + c_e(G_1,e_1)c_e(G_2,e_2) \label{eq02}
\end{align}
\end{obs}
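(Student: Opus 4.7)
The plan is to partition the cycles of $G$ into three classes according to which side of the identified edge their edges lie on, and then show that each class contributes the corresponding terms on the right-hand side of \eqref{eq01} and \eqref{eq02}. Let $e$ denote the single edge of $G$ obtained by identifying $e_1$ with $e_2$, and let $u, v$ be its endpoints.

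First, I would classify each cycle $C$ of $G$ by examining $E(C)$: either (i) $E(C) \subseteq E(G_1)$, or (ii) $E(C) \subseteq E(G_2)$, or (iii) $E(C)$ meets both $E(G_1)\setminus\{e_1\}$ and $E(G_2)\setminus\{e_2\}$. (These cases are disjoint since a cycle has at least three edges, so it cannot consist of the single edge $e$.) Classes (i) and (ii) account for exactly the cycles of $G_1$ and of $G_2$, respectively, giving the summands $c_o(G_1)+c_o(G_2)$ and $c_e(G_1)+c_e(G_2)$.

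For class (iii), I would argue that such a cycle $C$ crosses between the two sides only at $u$ and $v$: removing the interior vertices of one side disconnects $C$, so $C$ must visit $\{u,v\}$ exactly twice, splitting into a path $P_1 \subseteq G_1\setminus\{e_1\}$ and a path $P_2 \subseteq G_2\setminus\{e_2\}$, each joining $u$ to $v$. Conversely, any such pair $(P_1,P_2)$ yields a unique cycle of class (iii). Setting $C_i := P_i \cup \{e_i\}$, the map $C \mapsto (C_1, C_2)$ is a bijection between cycles of class (iii) and pairs (cycle of $G_1$ through $e_1$, cycle of $G_2$ through $e_2$).

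The parity check is straightforward: $|E(C)| = |E(P_1)| + |E(P_2)| = |E(C_1)| + |E(C_2)| - 2$, so $C$ has the same parity as $|E(C_1)| + |E(C_2)|$. Hence $C$ is odd iff exactly one of $C_1, C_2$ is odd, giving the cross terms $c_o(G_1,e_1)c_e(G_2,e_2) + c_e(G_1,e_1)c_o(G_2,e_2)$ for \eqref{eq01}, and $C$ is even iff $C_1,C_2$ have matching parities, giving the other cross terms for \eqref{eq02}. The only mildly delicate point is verifying the bijection in class (iii), specifically that a cycle using edges from both sides cannot cross the $\{u,v\}$-cut more than twice; but this follows from the basic fact that $G_1 \cap G_2 = \{u, v, e\}$ in the identified graph, so every $G_1$-$G_2$ transition of $C$ happens at $u$ or $v$, and a cycle visits each of $u, v$ at most once.
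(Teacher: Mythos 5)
Your argument is correct and complete: the paper states this as an Observation without any proof, and the decomposition you give (cycles within $G_1$, cycles within $G_2$, and cycles crossing the identified edge's endpoints, the last in bijection with pairs of cycles through $e_1$ and $e_2$ with the parity shift by $2$) is exactly the reasoning the authors implicitly rely on. Nothing is missing.
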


\begin{prop}\label{prop:even_exist} \verb||
\begin{enumerate}
\item $c_o(\Tc_n)/c_e(\Tc_n)$ diverges to infinity as $n \to \infty$.
\item $c_e(\Hc_q)/c_o(\Hc_q)$ diverges to infinity as $q \to \infty$.
\end{enumerate}
\end{prop}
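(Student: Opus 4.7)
The plan is to apply Observation~\ref{obs:addedge} to the edge-identification defining each of $\Tc_n$ and $\Hc_q$, using the cycle counts of $\Rc_n$, $\Lc_n$ and $\Lc_{q/2}$ already known from the proof of Proposition~\ref{prop:odd_exist}, and refining them to count cycles through the identified edge.

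For part~(1), $\Tc_n$ is $\Rc_n$ and $\Lc_n$ glued along $e_1 = u_2u_3$ and $e_2 = v_0v_{2n}$. The proof of Proposition~\ref{prop:odd_exist}(1) shows that every odd cycle of $\Rc_n$ is $C_n$ with a chosen subset of the $n$ four-cycle detours, and every even cycle is one of the $n$ four-cycles. Hence exactly half of the $2^n$ detour subsets include the four-cycle containing $e_1$, so $c_o(\Rc_n,e_1) = 2^{n-1}$ and $c_e(\Rc_n,e_1) = 1$. Since $e_2$ is itself the outermost chord of $\Lc_n$, the unique odd cycle through $e_2$ is the outer Hamilton cycle, and the even cycles through $e_2$ are the $n-1$ cycles obtained by pairing $e_2$ with another chord; thus $c_o(\Lc_n,e_2) = 1$ and $c_e(\Lc_n,e_2) = n-1$. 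Substituting into \eqref{eq01}--\eqref{eq02} yields
\[
c_o(\Tc_n) = (n+1)(2^{n-1}+1), \qquad c_e(\Tc_n) = 2^{n-1} + \binom{n}{2} + 2n - 1,
\]
whose ratio is asymptotic to $n+1 \to \infty$.

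For part~(2), let $e = v_{q/2-1}v_{q/2}$, the edge adjacent to the apex $v_{q/2}$ of $\Lc_{q/2}$. Every odd cycle of $\Lc_{q/2}$ has the form $v_i v_{i+1}\cdots v_{q-i}v_i$ for some $i \leq q/2-1$, which necessarily traverses the apex; hence every odd cycle uses $e$ and $c_o(\Lc_{q/2},e) = q/2$. In contrast, an even cycle of $\Lc_{q/2}$ uses two chords with indices $i<j\leq q/2-1$ and is confined to the two side arcs $v_i\cdots v_j$ and $v_{q-j}\cdots v_{q-i}$, neither of which touches $v_{q/2}$; hence $c_e(\Lc_{q/2},e) = 0$. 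Applying Observation~\ref{obs:addedge} to the gluing of two copies of $\Lc_{q/2}$, the cross-terms involving $c_e(\Lc_{q/2},e)$ vanish, leaving
\[
c_o(\Hc_q) = q, \qquad c_e(\Hc_q) = 2\binom{q/2}{2} + (q/2)^2 = (q/2)(q-1),
\]
so $c_e(\Hc_q)/c_o(\Hc_q) = (q-1)/2$ diverges.

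The only real work lies in the edge-restricted cycle counts: in~(1), that exactly $2^{n-1}$ odd cycles of $\Rc_n$ traverse $e_1$; in~(2), that \emph{no} even cycle of $\Lc_{q/2}$ crosses the apex. Once these are in hand, the conclusion reduces to arithmetic applied to Observation~\ref{obs:addedge}, with the asymptotic growth controlled by the dominant $2^{n-1}(n+1)$ term in part~(1) and by the vanishing cross-terms in part~(2).
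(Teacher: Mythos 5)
Your proposal is correct and follows essentially the same route as the paper: compute the edge-restricted cycle counts $c_o(\cdot,e)$, $c_e(\cdot,e)$ for the pieces $\Rc_n$, $\Lc_n$, $\Lc_{q/2}$ and feed them into Observation~\ref{obs:addedge}, with the same key counts ($c_o(\Rc_n,e_1)=2^{n-1}$, $c_e(\Rc_n,e_1)=1$, and the vanishing of even cycles through the apex edge of $\Lc$). The only divergence is that in part~(2) you work with $\Lc_{q/2}$ as in the stated definition of $\Hc_q$ and obtain the ratio $(q-1)/2$, whereas the paper's computation is written in terms of $\Lc_q$ and yields $q-\tfrac12$; both give the same conclusion.
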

\begin{proof}
By the proof of Proposition~\ref{prop:odd_exist}, we have 
\begin{align*}
c_o(\Rc_n) = 2^{n}, \quad c_e(\Rc_n) = n, \quad 
c_o(\Lc_n) = n \quad \mbox{and} \quad c_e(\Lc_n) = \binom{n}{2}.
\end{align*}

Let $e$ be an edge of a 4-cycle in $\Rc_n$ with each end not lying on $C_n$.
Let $f_1 = v_0v_{2n}$ and $f_2 = v_{n-1}v_n$ be the edges of $\Lc_n$ 
where the labels of vertices are as in its construction.
Then we have
\begin{align*}
c_o(\Rc_n,e) = 2^{n-1} \quad &\mbox{and} \quad c_e(\Rc_n,e) = 1,\\
c_o(\Lc_n,f_1) = 1 \quad &\mbox{and} \quad c_e(\Lc_n,f_1) = n-1,\\
c_o(\Lc_n,f_2) = n \quad &\mbox{and} \quad c_e(\Lc_n,f_2) = 0.
\end{align*}
Thus, by Observation~\ref{obs:addedge}, we have
\begin{align*}
\frac{c_o(\Tc_n)}{c_e(\Tc_n)}
&= \frac{c_o(\Rc_n) + c_o(\Lc_n) + c_o(\Rc_n,e)c_e(\Lc_n,f_1) + c_e(\Rc_n,e)c_o(\Lc_n,f_1)}
{c_e(\Rc_n) + c_e(\Lc_n) + c_o(\Rc_n,e)c_o(\Lc_n,f_1) + c_e(\Rc_n,e)c_e(\Lc_n,f_1)}\\
&=\frac{2^n + n + 2^{n-1}(n-1) + 1}{n +  \binom{n}{2} + 2^{n-1} + n-1}\\
&=\frac{n + 1 + (n+1)/2^{n-1}}{1 + (n^2+3n-2)/2^n} \to \infty \quad \mbox{as $n \to \infty$},
\end{align*}
and 
\begin{align*}
\frac{c_e(\Hc_q)}{c_o(\Hc_q)}
&= \frac
{2c_e(\Lc_q) + c_o(\Lc_q,f_2)^2 + c_e(\Lc_q,f_2)^2}
{2c_o(\Lc_q) + 2c_o(\Lc_q,f_2)c_e(\Lc_q,f_2)}
= q - \frac{1}{2} \to \infty \quad \mbox{as $q \to \infty$}.
\end{align*}
Therefore, the proposition holds.
\end{proof}

The following theorem is a direct consequence of Propositions~\ref{prop:odd_exist} and~\ref{prop:even_exist}.

\begin{thm}\label{thm:exist}
There are $2$-connected outerplanar graphs $G$ such that 
for any $r \in \{c_o(G)/c_e(G), c_e(G)/c_o(G)\}$,
$r$ diverges to infinity as $|V(G)| \to \infty$, not depending on the parity of $\partial G$.
\end{thm}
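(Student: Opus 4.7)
The plan is to assemble Theorem~\ref{thm:exist} as a four-way case split: for each choice of ratio $r \in \{c_o/c_e,\, c_e/c_o\}$ and each parity of $\partial G$, exhibit an explicit family of 2-connected outerplanar graphs that realizes the divergence, and then remark that the graphs already constructed in Propositions~\ref{prop:odd_exist} and~\ref{prop:even_exist} cover all four cases.

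First I would treat the case where $\partial G$ is odd. Proposition~\ref{prop:odd_exist}(1) supplies the family $\{\Rc_k : k \geq 3 \text{ odd}\}$ with $c_o(\Rc_k)/c_e(\Rc_k) \to \infty$, while Proposition~\ref{prop:odd_exist}(2) supplies $\{\Lc_m : m \geq 3 \text{ odd}\}$ with $c_e(\Lc_m)/c_o(\Lc_m) \to \infty$. In both cases I would note from the constructions that the graphs are 2-connected outerplanar with odd $\partial G$ and with $|V(G)| \to \infty$ as the defining parameter grows, so these handle the odd-boundary subcases immediately.

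Next I would handle the case where $\partial G$ is even. Here Proposition~\ref{prop:even_exist} delivers $\{\Tc_n\}$ and $\{\Hc_q\}$; inspection of Figures~\ref{fig:pens} and~\ref{fig:combin} confirms that both families are 2-connected non-bipartite outerplanar graphs whose boundary walks are even cycles (as already noted in the paragraph preceding Observation~\ref{obs:addedge}). Combined with the divergences $c_o(\Tc_n)/c_e(\Tc_n) \to \infty$ and $c_e(\Hc_q)/c_o(\Hc_q) \to \infty$ proved in Proposition~\ref{prop:even_exist}, this completes the even-boundary half.

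There is essentially no obstacle: once the four families are in hand, the only thing to verify is that each satisfies the claimed hypotheses (2-connected, outerplanar, prescribed parity of $\partial G$, $|V(G)| \to \infty$), which is immediate from their constructions. The proof therefore amounts to a one-line invocation of Propositions~\ref{prop:odd_exist} and~\ref{prop:even_exist}, with a brief sentence pointing out that the four families cover all combinations of ratio direction and boundary parity required by the statement.
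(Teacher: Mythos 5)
Your proposal matches the paper exactly: Theorem~\ref{thm:exist} is stated there as a direct consequence of Propositions~\ref{prop:odd_exist} and~\ref{prop:even_exist}, with $\Rc_k$ and $\Lc_m$ covering the odd-boundary cases and $\Tc_n$ and $\Hc_q$ covering the even-boundary cases. Your four-way case split is precisely the intended argument, so nothing further is needed.
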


\section{Bounds}\label{sec:bounds}

We first consider outerplanar graphs all of whose finite faces are odd, 
where a finite face $f$ is {\it odd} (resp.~{\it even}) 
if the length of the boundary walk of $f$ is odd (resp.~even).
In this case, we can directly apply Theorem~\ref{thm:subtrees} 
to evaluate the number of odd/even cycles.

\begin{thm}\label{thm:oddfaces}
Let $G$ be a $2$-connected outerplanar graph with each finite face odd and $|F(G)| \geq 3$.
Then $\frac{|F(G)|-1}{2c_e(G)} + 1 \leq c_o(G)/c_e(G) \leq \frac{|F(G)|-2}{c_e(G)} + 1$. 
%(equivalent to $1 - \frac{|F(G)|-2}{c_o(G)} \leq c_e(G)/c_o(G) \leq 1 - \frac{|F(G)|-1}{2c_o(G)}$). 
In particular, $1 \leq c_o(G)/c_e(G) \leq 2$. 
\end{thm}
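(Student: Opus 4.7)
The plan rests on the standard correspondence between cycles of a $2$-connected outerplanar graph $G$ and subtrees of its dual tree $T_G$: every cycle of $G$ bounds a union of finitely many finite faces, and the set of those faces induces a connected subgraph (hence a subtree) of $T_G$, and conversely every nonempty subtree arises from a unique cycle in this way. So $c_o(G) + c_e(G)$ equals the total number of (nonempty) subtrees of $T_G$.

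First I would analyze the parity of the cycle in terms of the size of the corresponding subtree. If a subtree $S \subseteq T_G$ has $k$ vertices, say corresponding to finite faces with (odd) boundary lengths $\ell_1,\ldots,\ell_k$, then the boundary of the enclosed region has length
\[
\sum_{i=1}^{k} \ell_i - 2(k-1),
\]
because each of the $k-1$ edges of $S$ contributes one edge of $G$ that is shared by two of the selected faces and thus cancels out. Since every $\ell_i$ is odd, this length is congruent to $k \pmod 2$. Therefore the cycle is odd (resp.\ even) if and only if $S$ has an odd (resp.\ even) number of vertices. Consequently $c_o(G)$ equals the number of odd subtrees of $T_G$ and $c_e(G)$ equals the number of even subtrees of $T_G$.

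Next I would invoke Theorem~\ref{thm:subtrees} to obtain
\[
c_o(G) - c_e(G) \;=\; \alpha(T_G),
\]
where $\alpha(T_G)$ is the independence number of $T_G$. The tree $T_G$ has $|F(G)|-1$ vertices and, under the hypothesis $|F(G)| \geq 3$, at least one edge. Since every tree is bipartite, the larger color class gives $\alpha(T_G) \geq \lceil (|F(G)|-1)/2 \rceil \geq (|F(G)|-1)/2$; on the other hand, any independent set in a graph with at least one edge misses at least one endpoint of that edge, so $\alpha(T_G) \leq |V(T_G)| - 1 = |F(G)|-2$. Dividing the identity $c_o(G) - c_e(G) = \alpha(T_G)$ by $c_e(G)$ (which is positive, because each edge of $T_G$ yields a $2$-vertex subtree, hence an even cycle) yields the claimed two-sided bound.

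For the final ``in particular'' statement, the lower bound $c_o(G)/c_e(G) \geq 1$ is immediate from the first inequality. For the upper bound $c_o(G)/c_e(G) \leq 2$, I would observe that the 2-vertex subtrees of $T_G$ alone already contribute $|E(T_G)| = |F(G)|-2$ distinct even cycles to $G$, so $c_e(G) \geq |F(G)|-2$, and therefore the upper bound $(|F(G)|-2)/c_e(G) + 1 \leq 2$ follows at once. The main subtlety in the argument is the parity computation in the second paragraph; beyond that, everything reduces to the bijective translation into Jamison's theorem and elementary inequalities on tree independence numbers.
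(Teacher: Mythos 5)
Your proposal is correct and follows essentially the same route as the paper's proof: translate cycles into subtrees of the dual tree $T_G$, observe that odd faces make cycle parity match subtree order parity, apply Theorem~\ref{thm:subtrees} to get $c_o(G)-c_e(G)=\alpha(T_G)$, and bound $\alpha(T_G)$ between $(|F(G)|-1)/2$ and $|F(G)|-2$ together with $c_e(G)\geq |E(T_G)|=|F(G)|-2$. Your explicit length computation $\sum_i \ell_i - 2(k-1)$ simply fills in the parity claim the paper states without calculation.
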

\begin{proof}
By the assumption, for any subtree $H$ of the dual tree $T_G$,
the parity of the order of $H$ is the same as that of 
the cycle in $G$ bounding a union of faces corresponding to vertices of $H$.
Thus, Theorem~\ref{thm:subtrees} implies that $c_o(G) - c_e(G) = \alpha(T_G)$,
where $\alpha(T_G)$ denotes the independence number of $T_G$.
Since $\frac{|F(G)|-1}{2} \leq \alpha(T_G) \leq (|F(G)|-1) - 1$,
we obtain that $1\leq\frac{|F(G)|-1}{2c_e(G)} + 1 \leq c_o(G)/c_e(G) \leq \frac{|F(G)|-2}{c_e(G)} + 1$.
Since $c_e(G)$ is equal to the number of even subtrees of $T_G$, 
one has $c_e(G) \geq |E(T_G)| = |F(G)|-2 \geq 1$ by $|F(G)| \geq 3$. 
Hence, $c_o(G)/c_e(G) \leq 2$ also holds. 
\end{proof}

%\begin{cor}\label{cor:oddfaces}
%Let $G$ be a $2$-connected outerplanar graph with each finite face odd and $|F(G)| \geq 3$.
%Then $1 - \frac{|F(G)|-2}{c_o(G)} \leq c_e(G)/c_o(G) \leq 1 - \frac{|F(G)|-1}{2c_o(G)}$. 
%\end{cor}

%\begin{rem}
%In Theorem~\ref{thm:oddfaces},
%if $|F(G)|=2$, i.e., $G$ is an odd cycle, then $c_e(G) = 0$ which means $c_o(G)/c_e(G) = \infty$.
%\end{rem}

Next, we give a general upper bound of $c_o(G)/c_e(G)$ for a $2$-connected outerplanar graph $G$ with even $\partial G$ as follows.
%(The sharpness of the result is given just after the proof.)

\begin{thm}\label{thm:oddchords}
Let $G$ be a $2$-connected outerplanar graph with even $\partial G$ and let $k \geq 0$ be the number of odd chords of $G$. 
Suppose that $G$ has at least one even chord. Then $c_o(G)/c_e(G) \leq k$.
Furthermore, this bound is sharp.
\end{thm}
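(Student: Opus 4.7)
The plan is to prove $c_o(G) \le k \cdot c_e(G)$ via a parity characterization of cycles followed by a per-odd-chord inequality. First, I would show that a cycle $C$ in $G$ has odd length if and only if it contains an odd number of odd chords. Since $\partial G$ is an even cycle, its vertices admit a proper $2$-coloring; traversing $C$, each edge of $\partial G$ and each even chord of $C$ flips the color, while each odd chord of $C$ preserves it. As $C$ is closed the total number of flips is even, so the number of boundary edges in $C$ plus the number of even chords in $C$ is even, forcing $|C| \equiv |\{\text{odd chords in } C\}| \pmod 2$. In particular, every odd cycle contains at least one odd chord, whence
\[
c_o(G) \le \sum_{e \text{ odd chord}} c_o(G, e),
\]
where $c_o(G, e)$ is the number of odd cycles through $e$.

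The heart of the proof is then the per-chord inequality $c_o(G, e) \le c_e(G)$ for every odd chord $e$, which combined with the inequality above gives $c_o(G) \le k \cdot c_e(G)$. I would translate this to a combinatorial statement on the dual tree $T_G$: cycles of $G$ correspond bijectively to non-empty subtrees of $T_G$, and the parity of $|C|$ equals the parity of the number of odd edges of $T_G$ (i.e.\ odd chords of $G$) having exactly one endpoint in the corresponding subtree $T_C$. The target becomes, for each odd edge $e$ of $T_G$, an injection from the subtrees crossed by $e$ with odd crossing-parity into the subtrees with even crossing-parity. The most natural candidate---``add the face on the opposite side of $e$ to $T_C$''---flips the crossing-parity exactly when the added face is itself odd (i.e.\ its boundary contains an odd number of odd chords); when it is even, one must extend further into the component of $T_G$ on that side. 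The existence of at least one even chord of $G$ (equivalently, an even edge of $T_G$) provides precisely the flexibility needed to complete this iterative construction without collisions.

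The main obstacle is making the injection explicit and verifying its injectivity uniformly across all odd chords. Without the even-chord hypothesis the per-chord inequality can genuinely fail, as illustrated by the graphs $\Lc_m$ whose chords are all odd. For sharpness, I would construct an explicit family achieving the bound: glue $k$ odd-chord gadgets onto a base graph containing one even chord so that each odd cycle is forced to use exactly one odd chord in a distinguishable way. Applying Observation~\ref{obs:addedge} iteratively along the gluings, one computes $c_o$ and $c_e$ directly and checks that $c_o(G)/c_e(G)$ can be made arbitrarily close to $k$.
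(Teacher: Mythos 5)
Your reduction is sound: the two-coloring argument correctly shows that a cycle of $G$ is odd exactly when it contains an odd number of odd chords, hence $c_o(G)\le\sum_{e}c_o(G,e)$ summed over odd chords, and the theorem would follow from the per-chord inequality $c_o(G,e)\le c_e(G)$. That inequality is indeed true under the hypotheses, but you do not prove it: the dual-tree injection is only a candidate, you yourself note that the map ``add the face on the opposite side of $e$'' fails to flip the parity when that face is even, and the ``iterative construction'' meant to repair this is never specified, let alone shown to be injective or to terminate. Since this is the heart of the argument, the proposal has a genuine gap. (A side remark: $\Lc_m$ does not witness the necessity of the even-chord hypothesis in this theorem's setting, since $\partial\Lc_m$ is odd; a correct witness is $C_4$ with its single chord, where $k=1$, $c_o=2$, $c_e=1$.)

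The gap can be closed without the dual tree, by the same computation the paper runs inside an induction on $k$. Split $G$ along the odd chord $e$ into $G_1$ and $G_2$ with $e$ on both boundaries. Then $c_o(G,e)=c_o(G_1,e)+c_o(G_2,e)$, and each summand is at least $1$ because $\partial G_i$ is an odd cycle through $e$ (the endpoints of $e$ have the same color, so each boundary arc has even length). Observation~\ref{obs:addedge} gives $c_e(G)= c_o(G_1,e)c_o(G_2,e)+c_e(G_1)+c_e(G_2)+c_e(G_1,e)c_e(G_2,e)$, and the even chord $f$, say in $G_1$, together with the arc of $\partial G_1$ avoiding $e$ (which joins differently colored vertices and so has odd length) yields an even cycle of $G_1$; hence $c_e(G)\ge c_o(G_1,e)c_o(G_2,e)+1$, and $x+y\le xy+1$ for $x,y\ge1$ finishes the per-chord bound. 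The paper instead deletes odd chords one at a time and inducts on $k$, invoking exactly this inequality at each step; your double counting over odd chords is a legitimate alternative organization, but only once the per-chord bound is actually established. Your sharpness discussion is likewise only a plan; the paper settles for the asymptotic witness $\Tc_k$, which has $k+1$ odd chords and ratio at least $k$.
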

\begin{proof}
We prove the theorem by induction on $k$.
If $k = 0$, then $c_o(G)/c_e(G) = 0$, so we are done.
Assume that $k\geq 1$ and the theorem holds for any non-negative integer smaller than $k$.

Let $e$ be an odd chord of $G$ 
and let $G'$ be the 2-connected outerplanar graph obtained from $G$ by removing $e$.
Since $G'$ has $k-1$ odd chords and at least one even chord, $c_o(G')/c_e(G') \leq k - 1$ by induction hypothesis.
On the other hand, $G$ can be obtained from two outerplanar graphs $G_1$ and $G_2$ by identifying $e$,
where $e$ lies on both $\partial G_1$ and $\partial G_2$. 
Thus, $c_e(G') = c_o(G_1,e)c_o(G_2,e) + \alpha$, where $\alpha \geq 1$ since $G'$ has at least one even chord. 
By $c_o(G')/c_e(G') \leq k - 1$, we know $c_o(G') \leq (k-1) \left( c_o(G_1,e)c_o(G_2,e) + \alpha \right)$.
Note that $c_e(G) \geq c_e(G')$. Therefore, we have
\begin{align*}
\frac{c_o(G)}{c_e(G)} &\leq 
\frac{(k-1) \left( c_o(G_1,e)c_o(G_2,e) + \alpha\right) + c_o(G_1,e) + c_o(G_2,e)}
{c_o(G_1,e)c_o(G_2,e) + \alpha}\\
&\leq (k-1) + \frac{c_o(G_1,e) + c_o(G_2,e)}{c_o(G_1,e)c_o(G_2,e) + \alpha} \leq k,
\end{align*}
where the final inequality holds by $\alpha \geq 1$ and $c_o(G_1,e), c_o(G_2,e) \geq 1$. 

Regarding the sharpness of $c_o(G)/c_e(G) \leq k$, 
since $\Tc_k$ has $(k+1)$ odd chords and at least one even chord when $k \geq 3$, 
we see that $c_o(\Tc_k)/c_e(\Tc_k)=\frac{k + 1 + (k+1)/2^{k-1}}{1 + (k^2+3k-2)/2^k} \geq k$ if $k \geq 11$. 
%(See the proof of Proposition~\ref{prop:even_exist}.) 
Therefore, this upper bound is the best possible.
\end{proof}

In the remaining part of this section, we show upper/lower bounds of the ratio
for outerplanar graphs with prescribed dual trees,
namely a path and a star.
%We first show the upper bound for the case when the dual tree is a path. 
\begin{prop}\label{prop:dualpath}
Let $G$ be a $2$-connected outerplanar graph with even $\partial G$ and assume that the dual tree $T_G$ of $G$ is a path. 
Then $c_o(G)/c_e(G) \leq 2$. Furthermore, this bound is sharp.
\end{prop}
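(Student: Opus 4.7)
The plan is to exploit the fact that when the dual tree $T_G$ is a path, the cycles of $G$ are in one-to-one correspondence with the nonempty sub-intervals of that path. I would label the finite faces $F_1, F_2, \ldots, F_m$ in path order, set $\ell_k := |\partial F_k|$, and observe that the cycle associated with the interval $[i,j]$ bounds $F_i \cup F_{i+1} \cup \cdots \cup F_j$ and has length $\sum_{k=i}^{j} \ell_k - 2(j-i)$; in particular its parity equals that of $\sum_{k=i}^{j} \ell_k$.

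I would then pass to parities by setting $s_k := \ell_k \bmod 2$ and introducing the prefix sums $S_0 := 0$, $S_k := s_1 + \cdots + s_k \pmod 2$ for $1 \le k \le m$. The cycle on $[i,j]$ is odd precisely when $S_{i-1} \neq S_j$. Since $\partial G$ is even we have $S_m = 0$, so at least two of $S_0, \ldots, S_m$ vanish. Writing $a := \#\{k : S_k = 0\}$ and $b := \#\{k : S_k = 1\}$, so that $a + b = m + 1$ and $a \ge 2$, one reads off
\[
c_o(G) = ab, \qquad c_e(G) = \binom{a}{2} + \binom{b}{2},
\]
whence the desired bound $c_o(G)/c_e(G) \le 2$ reduces to the elementary arithmetic inequality $ab \le a(a-1) + b(b-1)$.

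The main step is to verify this inequality under the side condition $a \ge 2$. Rewriting it as $a^2 - ab + b^2 - a - b \ge 0$ and using the identity
\[
a^2 - ab + b^2 - a - b \;=\; \tfrac{1}{2}\bigl[(a-b)^2 + a(a-2) + b(b-2)\bigr],
\]
the cases $b = 0$ and $b \ge 2$ are immediate. The only delicate case is $b = 1$, where the right-hand side reduces to $a^2 - 2a$, nonnegative exactly because $a \ge 2$; this is where equality can occur and where I expect to need to be most careful, since it is precisely what produces the sharp constant $2$.

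For sharpness I would take $K_4$ minus an edge, i.e.\ two triangles glued along a common edge: here $T_G$ is a single edge, both finite faces are odd, $\partial G$ is a $4$-cycle, and a direct count gives $c_o(G) = 2$ and $c_e(G) = 1$. This matches the extremal parameters $(a,b) = (2,1)$ in the analysis above and confirms the bound is attained.
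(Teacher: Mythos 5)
Your proposal is correct and follows essentially the same route as the paper: both arguments identify the cycles with the sub-intervals of the dual path and reduce the bound to the arithmetic inequality $ab\le\binom{a}{2}+\binom{b}{2}$ for the two parity classes (your $(a,b)$ is the paper's $(b+2,a)$ in terms of even/odd chords, and the cycle counts agree after this substitution). The only cosmetic differences are your prefix-sum bookkeeping and your direct verification of the inequality via an identity, where the paper argues by contradiction; your sharpness example $K_4$ minus an edge is the smallest instance of the paper's family (an even cycle with one odd chord).
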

\begin{proof}
Let $a$ and $b$ be the number of odd and even chords of $G$, respectively.
Since $T_G$ is a path, we have
\begin{align*}
c_o(G) = a \cdot b +2a = a (b+2) \; \mbox{ and }\; c_e(G) = \binom{b}{2} + \binom{a}{2} + 2b + 1.
\end{align*}
We suppose to the contrary that $c_o(G) / c_e(G) > 2$. 
This together with the above two equations leads to the following: 
\begin{align*}
\frac{c_o(G)}{c_e(G)}=\frac{2a(b+2)}{a(a-1)+(b+1)(b+2)} > 2 \;\Longleftrightarrow\; a(b+2)>a(a-1)+(b+1)(b+2). 
\end{align*}
By this inequality, we see that $a(b-a+3)>(b+1)(b+2) > 0$, so $b-a+3 \geq 1$, i.e., $a \leq b+2$. 
We also see that $(a-b-1)(b+2)>a(a-1) \geq 0$, so $a \geq b+1$. Thus, we obtain that $a=b+1$ or $b+2$. 
However, for each of those two cases, we have $c_o(G)/c_e(G)=(b+2)/(b+1) \leq 2$, a contradiction. 

Regarding the sharpness of this bound, 
every outerplanar graph $G$ obtained from an even cycle by adding exactly one odd chord attains the equality. 
\end{proof}

\begin{prop}\label{prop:dualstar}
Let $G$ be a $2$-connected outerplanar graph and assume that the dual tree $T_G$ of $G$ is a star.
If there is at least one odd face corresponding to a leaf of $T_G$,
then both $c_o(G)/c_e(G)$ and $c_e(G)/c_o(G)$ converge to $1$ as $|F(G)| \to \infty$. 
\end{prop}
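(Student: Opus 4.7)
The plan is to exploit the standard bijection between cycles of a $2$-connected outerplanar graph $G$ and subtrees of its dual tree $T_G$ — a cycle $C$ corresponds to the set of faces it bounds, and this set is a subtree of $T_G$ — and then enumerate subtrees directly, which is easy when $T_G$ is a star. A subtree on $k$ vertices gives rise to a cycle of length equal to the sum of the boundary lengths of those $k$ faces minus $2(k-1)$, so in particular the parity of the cycle equals the parity of that sum.

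Let $c$ denote the center of $T_G$, let $\ell_1,\dots,\ell_m$ be its leaves (so $m=|F(G)|-2$), and let $s_0,s_1,\dots,s_m$ be the boundary lengths of the corresponding faces. Write $a$ for the number of $i\in\{1,\dots,m\}$ with $s_i$ odd and set $b=m-a$; the hypothesis gives $a\geq 1$. The subtrees of $T_G$ split into two types: the $m$ one-vertex subtrees $\{\ell_i\}$, whose cycles have parity $s_i$ and therefore contribute $a$ odd and $b$ even cycles in total; and the $2^m$ subtrees of the form $\{c\}\cup S$ with $S\subseteq\{\ell_1,\dots,\ell_m\}$, whose cycles have parity $s_0+|\{i\in S:s_i\text{ odd}\}|\pmod 2$.

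To count the latter by parity, I would apply the identity $\sum_{j\text{ odd}}\binom{a}{j}=\sum_{j\text{ even}}\binom{a}{j}=2^{a-1}$ (valid since $a\geq 1$): regardless of the parity of $s_0$, exactly $2^{a-1}\cdot 2^b=2^{m-1}$ of these subtrees yield odd cycles and exactly $2^{m-1}$ yield even cycles. Hence $c_o(G)=a+2^{m-1}$ and $c_e(G)=b+2^{m-1}$, and since $a,b\leq m$ grow linearly in $m$ while $2^{m-1}$ grows exponentially, both
\[
\frac{c_o(G)}{c_e(G)}=\frac{a+2^{m-1}}{b+2^{m-1}}\quad\text{and}\quad\frac{c_e(G)}{c_o(G)}=\frac{b+2^{m-1}}{a+2^{m-1}}
\]
tend to $1$ as $m=|F(G)|-2\to\infty$.

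The argument is essentially a direct enumeration, so I do not anticipate a real obstacle. The one point that warrants attention is the necessity of the hypothesis $a\geq 1$: without it, every center-containing subtree would produce a cycle of the single parity $s_0$, so $2^m$ cycles of one parity would swamp the $m$ single-leaf cycles of the opposite parity, and the ratios would fail to converge to $1$. The assumed existence of an odd leaf face is exactly what forces the equal split $2^{m-1}:2^{m-1}$ among the center-containing subtrees.
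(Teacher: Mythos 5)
Your proof is correct and follows essentially the same route as the paper's: both enumerate the cycles of the star-dual graph as the $a$ odd and $b$ even boundary cycles of the leaf faces plus the $2^{m}$ cycles through the central face, which split $2^{m-1}:2^{m-1}$ by parity via the identity $\sum_{j\ \mathrm{odd}}\binom{a}{j}=\sum_{j\ \mathrm{even}}\binom{a}{j}=2^{a-1}$ (valid precisely because $a\geq 1$). Your phrasing via the subtree bijection and face parities lets you treat both parities of $\partial G$ uniformly, whereas the paper separates the cases $\partial G$ even and $\partial G$ odd (and the parity of $a$), but the underlying count is identical.
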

\begin{proof}
First suppose that $\partial G$ is even.
Let $a$ and $b$ be the number of odd chords and that of even chords of $G$, respectively;
note that $a \geq 1$ by the assumption.
Let $f$ be the face in $G$ corresponding to the center vertex of $T_G$. 
Since $T_G$ is a star and $\partial G$ is even, 
the parity of the number of odd chords lying on $\partial f$ 
is the same as that of the length of $\partial f$. 
If $a$ is even, then we have 
\begin{align*}
c_o(G) &= 2^b \sum^{a/2 - 1}_{k=0} \binom{a}{2k+1} + a = 2^{a+b-1}+a, \mbox{ and } \\
c_e(G) &= 2^b \sum^{a/2}_{k=0} \binom{a}{2k} + b = 2^{a+b-1}+b.
\end{align*}
If $a$ is odd, then we have 
\begin{align*}
c_o(G) &= 2^b \sum^{(a-1)/2}_{k=0} \binom{a}{2k} + a = 2^{a+b-1}+a, \mbox{ and } \\
c_e(G) &= 2^b \sum^{(a-1)/2}_{k=0} \binom{a}{2k+1} + b = 2^{a+b-1}+b.
\end{align*}
Thus, regardless of the parity of $a$,
$c_o(G)/c_e(G)$ and $c_e(G)/c_o(G)$ converge to $1$ as $|F(G)| \to \infty$ (i.e., $a+b \to \infty$)
since $2^{a+b-1} \gg a+b$.

Next suppose that $\partial G$ is odd.
Let $f$ be the face in $G$ corresponding to the center vertex of $T_G$.
Let $a$ (resp.~$b$) be the number of chords shared by $f$ 
and an odd (resp.~even) face corresponding to a leaf of $T_G$.
Since $T_G$ is a star and $\partial G$ is odd, 
the parity of the length of $\partial f$ is opposite to the parity of the number of odd chords lying on $\partial f$.
Thus, the number of odd and even cycles can be calculated similarly to the first case
depending on the opposite parity of $a$.
\end{proof}

\section{Forbidden subgraphs/minors condition}\label{sec:forbidden}

A graph $G$ is {\it $H$-free} (resp.~{\it $H$-minor-free})
if $G$ contains no $H$ as its induced subgraph (resp.~as a minor).
Recall that Theorem~\ref{thm:exist} implying that 
the $K_4$-minor-free and $K_{2,3}$-minor-free condition is not sufficient 
to bound $c_o(G)/c_e(G)$ or $c_e(G)/c_o(G)$ by a constant for a 2-connected outerplanar graph $G$.
Thus, we consider another forbidden subgraphs/minors condition 
which bounds $c_o(G)/c_e(G)$ or $c_e(G)/c_o(G)$ by a constant.

It is easy to see that for any $k \geq 3$,
there is a 2-connected $C_k$-free outerplanar graph such that 
neither $c_o(G)/c_e(G)$ nor $c_e(G)/c_o(G)$ can be bounded by a constant
since we can make each cycle in the constructions of $\Rc$ and $\Lc$ be arbitrarily long.
Of course, no 2-connected graph of order at least~3 is $C_3$-minor-free,
and every $H$-minor-free graph is $H$-free.
Therefore, we focus on a 2-connected $K_{1,3}$-free (or claw-free) and $K_4$-minor-free graphs.

We here introduce two particular outerplanar graphs.
Let $C_t = v_0v_1 \dots v_{t-1}$ be a cycle with $t \geq 3$.
For several indices $i \in \{0,1,\dots,t-1\}$,
we add a vertex $r_i$ to make a triangle $v_iv_{i+1}r_i$, where subscripts are modulo $t$
(see Figure~\ref{fig:sungear}).
The set of $2^t$ resulting graphs constructed above is denoted by $\Sc_t$.
Let $\Zc_d$ be the outerplanar graph obtained from $\Lc_d$ (where recall that $d \geq 1$)
by joining $v_i$ and $v_{2d-1-i}$ for each $i \in \{0,\dots,d-2\}$ (see Figure~\ref{fig:pencilZ}),
and let $\Zc^*_d$ be the graph obtained from $\Zc_d$ by removing $v_d$.
Observe that $\Zc_d, \Zc^*_d$ 
and any graph in $\Sc_t$ are 2-connected outerplanar graphs and that they are $K_{1,3}$-free.

\begin{figure}[htb]
\begin{minipage}{0.5\hsize}
\centering
%WinTpicVersion3.08
\unitlength 0.1in
\begin{picture}( 18.9500, 15.3500)(  9.5500,-22.5000)
% CIRCLE 2 0 0 0
% 4 1800 1000 1760 1030 1760 1020 1760 1020
% 
\special{pn 8}%
\special{sh 1.000}%
\special{ar 1800 1000 50 50  0.0000000 6.2831853}%
% CIRCLE 2 0 0 0
% 4 2200 1000 2160 1030 2160 1020 2160 1020
% 
\special{pn 8}%
\special{sh 1.000}%
\special{ar 2200 1000 50 50  0.0000000 6.2831853}%
% CIRCLE 2 0 0 0
% 4 1400 1400 1360 1430 1360 1420 1360 1420
% 
\special{pn 8}%
\special{sh 1.000}%
\special{ar 1400 1400 50 50  0.0000000 6.2831853}%
% CIRCLE 2 0 0 0
% 4 1400 1800 1360 1830 1360 1820 1360 1820
% 
\special{pn 8}%
\special{sh 1.000}%
\special{ar 1400 1800 50 50  0.0000000 6.2831853}%
% CIRCLE 2 0 0 0
% 4 2000 2200 1960 2230 1960 2220 1960 2220
% 
\special{pn 8}%
\special{sh 1.000}%
\special{ar 2000 2200 50 50  0.0000000 6.2831853}%
% CIRCLE 2 0 0 0
% 4 2600 1800 2560 1830 2560 1820 2560 1820
% 
\special{pn 8}%
\special{sh 1.000}%
\special{ar 2600 1800 50 50  0.0000000 6.2831853}%
% CIRCLE 2 0 0 0
% 4 2600 1400 2560 1430 2560 1420 2560 1420
% 
\special{pn 8}%
\special{sh 1.000}%
\special{ar 2600 1400 50 50  0.0000000 6.2831853}%
% CIRCLE 2 0 0 0
% 4 2800 1600 2760 1630 2760 1620 2760 1620
% 
\special{pn 8}%
\special{sh 1.000}%
\special{ar 2800 1600 50 50  0.0000000 6.2831853}%
% LINE 2 0 3 0
% 2 2600 1800 2000 2200
% 
\special{pn 8}%
\special{pa 2600 1800}%
\special{pa 2000 2200}%
\special{fp}%
% LINE 2 0 3 0
% 2 2000 2200 1400 1800
% 
\special{pn 8}%
\special{pa 2000 2200}%
\special{pa 1400 1800}%
\special{fp}%
% LINE 2 0 3 0
% 2 1400 1800 1400 1400
% 
\special{pn 8}%
\special{pa 1400 1800}%
\special{pa 1400 1400}%
\special{fp}%
% LINE 2 0 3 0
% 2 1400 1400 1800 1000
% 
\special{pn 8}%
\special{pa 1400 1400}%
\special{pa 1800 1000}%
\special{fp}%
% LINE 2 0 3 0
% 2 1800 1000 2200 1000
% 
\special{pn 8}%
\special{pa 1800 1000}%
\special{pa 2200 1000}%
\special{fp}%
% LINE 2 0 3 0
% 2 2200 1000 2600 1400
% 
\special{pn 8}%
\special{pa 2200 1000}%
\special{pa 2600 1400}%
\special{fp}%
% LINE 2 0 3 0
% 2 2600 1400 2600 1800
% 
\special{pn 8}%
\special{pa 2600 1400}%
\special{pa 2600 1800}%
\special{fp}%
% STR 2 0 3 0
% 3 2000 1960 2000 2060 5 0
% $v_0$
\put(20.0000,-20.6000){\makebox(0,0){$v_0$}}%
% STR 2 0 3 0
% 3 1580 1700 1580 1800 5 0
% $v_1$
\put(15.8000,-18.0000){\makebox(0,0){$v_1$}}%
% STR 2 0 3 0
% 3 1580 1300 1580 1400 5 0
% $v_2$
\put(15.8000,-14.0000){\makebox(0,0){$v_2$}}%
% STR 2 0 3 0
% 3 2430 1300 2430 1400 5 0
% $v_5$
\put(24.3000,-14.0000){\makebox(0,0){$v_5$}}%
% STR 2 0 3 0
% 3 2430 1700 2430 1800 5 0
% $v_6$
\put(24.3000,-18.0000){\makebox(0,0){$v_6$}}%
% STR 2 0 3 0
% 3 2200 1040 2200 1140 5 0
% $v_4$
\put(22.0000,-11.4000){\makebox(0,0){$v_4$}}%
% STR 2 0 3 0
% 3 1800 1040 1800 1140 5 0
% $v_3$
\put(18.0000,-11.4000){\makebox(0,0){$v_3$}}%
% CIRCLE 2 0 0 0
% 4 1600 2200 1560 2230 1560 2220 1560 2220
% 
\special{pn 8}%
\special{sh 1.000}%
\special{ar 1600 2200 50 50  0.0000000 6.2831853}%
% CIRCLE 2 0 0 0
% 4 1200 1600 1160 1630 1160 1620 1160 1620
% 
\special{pn 8}%
\special{sh 1.000}%
\special{ar 1200 1600 50 50  0.0000000 6.2831853}%
% CIRCLE 2 0 0 0
% 4 2000 800 1960 830 1960 820 1960 820
% 
\special{pn 8}%
\special{sh 1.000}%
\special{ar 2000 800 50 50  0.0000000 6.2831853}%
% LINE 2 0 3 0
% 2 2800 1600 2600 1400
% 
\special{pn 8}%
\special{pa 2800 1600}%
\special{pa 2600 1400}%
\special{fp}%
% LINE 2 0 3 0
% 2 2800 1600 2600 1800
% 
\special{pn 8}%
\special{pa 2800 1600}%
\special{pa 2600 1800}%
\special{fp}%
% LINE 2 0 3 0
% 2 2000 2200 1600 2200
% 
\special{pn 8}%
\special{pa 2000 2200}%
\special{pa 1600 2200}%
\special{fp}%
% LINE 2 0 3 0
% 2 1600 2200 1400 1800
% 
\special{pn 8}%
\special{pa 1600 2200}%
\special{pa 1400 1800}%
\special{fp}%
% LINE 2 0 3 0
% 2 1400 1800 1200 1600
% 
\special{pn 8}%
\special{pa 1400 1800}%
\special{pa 1200 1600}%
\special{fp}%
% LINE 2 0 3 0
% 2 1200 1600 1400 1400
% 
\special{pn 8}%
\special{pa 1200 1600}%
\special{pa 1400 1400}%
\special{fp}%
% LINE 2 0 3 0
% 2 1800 1000 2000 800
% 
\special{pn 8}%
\special{pa 1800 1000}%
\special{pa 2000 800}%
\special{fp}%
% LINE 2 0 3 0
% 2 2000 800 2200 1000
% 
\special{pn 8}%
\special{pa 2000 800}%
\special{pa 2200 1000}%
\special{fp}%
% STR 2 0 3 0
% 3 1430 2100 1430 2200 5 0
% $r_0$
\put(14.3000,-22.0000){\makebox(0,0){$r_0$}}%
% STR 2 0 3 0
% 3 1180 1370 1180 1470 5 0
% $r_1$
\put(11.8000,-14.7000){\makebox(0,0){$r_1$}}%
% STR 2 0 3 0
% 3 2170 700 2170 800 5 0
% $r_3$
\put(21.7000,-8.0000){\makebox(0,0){$r_3$}}%
% STR 2 0 3 0
% 3 2810 1370 2810 1470 5 0
% $r_5$
\put(28.1000,-14.7000){\makebox(0,0){$r_5$}}%
\end{picture}%
\caption{A graph in $\Sc_7$}
\label{fig:sungear}
\end{minipage}
\begin{minipage}{0.45\hsize}
\centering
%WinTpicVersion3.08
\unitlength 0.1in
\begin{picture}( 18.7500,  6.8500)(  7.7500,-18.6000)
% CIRCLE 2 0 0 0
% 4 1000 1400 960 1430 960 1420 960 1420
% 
\special{pn 8}%
\special{sh 1.000}%
\special{ar 1000 1400 50 50  0.0000000 6.2831853}%
% CIRCLE 2 0 0 0
% 4 1000 1800 960 1830 960 1820 960 1820
% 
\special{pn 8}%
\special{sh 1.000}%
\special{ar 1000 1800 50 50  0.0000000 6.2831853}%
% LINE 2 0 3 0
% 2 1000 1400 1000 1800
% 
\special{pn 8}%
\special{pa 1000 1400}%
\special{pa 1000 1800}%
\special{fp}%
% CIRCLE 2 0 0 0
% 4 1400 1400 1360 1430 1360 1420 1360 1420
% 
\special{pn 8}%
\special{sh 1.000}%
\special{ar 1400 1400 50 50  0.0000000 6.2831853}%
% CIRCLE 2 0 0 0
% 4 1400 1800 1360 1830 1360 1820 1360 1820
% 
\special{pn 8}%
\special{sh 1.000}%
\special{ar 1400 1800 50 50  0.0000000 6.2831853}%
% CIRCLE 2 0 0 0
% 4 1800 1800 1760 1830 1760 1820 1760 1820
% 
\special{pn 8}%
\special{sh 1.000}%
\special{ar 1800 1800 50 50  0.0000000 6.2831853}%
% CIRCLE 2 0 0 0
% 4 1800 1400 1760 1430 1760 1420 1760 1420
% 
\special{pn 8}%
\special{sh 1.000}%
\special{ar 1800 1400 50 50  0.0000000 6.2831853}%
% CIRCLE 2 0 0 0
% 4 2200 1400 2160 1430 2160 1420 2160 1420
% 
\special{pn 8}%
\special{sh 1.000}%
\special{ar 2200 1400 50 50  0.0000000 6.2831853}%
% CIRCLE 2 0 0 0
% 4 2200 1800 2160 1830 2160 1820 2160 1820
% 
\special{pn 8}%
\special{sh 1.000}%
\special{ar 2200 1800 50 50  0.0000000 6.2831853}%
% CIRCLE 2 0 0 0
% 4 2600 1600 2560 1630 2560 1620 2560 1620
% 
\special{pn 8}%
\special{sh 1.000}%
\special{ar 2600 1600 50 50  0.0000000 6.2831853}%
% LINE 2 0 3 0
% 2 2600 1600 2200 1400
% 
\special{pn 8}%
\special{pa 2600 1600}%
\special{pa 2200 1400}%
\special{fp}%
% LINE 2 0 3 0
% 4 2200 1400 1000 1400 1000 1400 1000 1400
% 
\special{pn 8}%
\special{pa 2200 1400}%
\special{pa 1000 1400}%
\special{fp}%
\special{pa 1000 1400}%
\special{pa 1000 1400}%
\special{fp}%
% LINE 2 0 3 0
% 2 1000 1800 2200 1800
% 
\special{pn 8}%
\special{pa 1000 1800}%
\special{pa 2200 1800}%
\special{fp}%
% LINE 2 0 3 0
% 2 2200 1800 2600 1600
% 
\special{pn 8}%
\special{pa 2200 1800}%
\special{pa 2600 1600}%
\special{fp}%
% LINE 2 0 3 0
% 2 2200 1800 2200 1400
% 
\special{pn 8}%
\special{pa 2200 1800}%
\special{pa 2200 1400}%
\special{fp}%
% LINE 2 0 3 0
% 2 1800 1400 1800 1800
% 
\special{pn 8}%
\special{pa 1800 1400}%
\special{pa 1800 1800}%
\special{fp}%
% LINE 2 0 3 0
% 2 1400 1800 1400 1400
% 
\special{pn 8}%
\special{pa 1400 1800}%
\special{pa 1400 1400}%
\special{fp}%
% STR 2 0 3 0
% 3 1000 1160 1000 1260 5 0
% $v_0$
\put(10.0000,-12.6000){\makebox(0,0){$v_0$}}%
% STR 2 0 3 0
% 3 1400 1160 1400 1260 5 0
% $v_1$
\put(14.0000,-12.6000){\makebox(0,0){$v_1$}}%
% STR 2 0 3 0
% 3 1800 1160 1800 1260 5 0
% $v_2$
\put(18.0000,-12.6000){\makebox(0,0){$v_2$}}%
% STR 2 0 3 0
% 3 2200 1160 2200 1260 5 0
% $v_3$
\put(22.0000,-12.6000){\makebox(0,0){$v_3$}}%
% STR 2 0 3 0
% 3 2600 1360 2600 1460 5 0
% $v_4$
\put(26.0000,-14.6000){\makebox(0,0){$v_4$}}%
% STR 2 0 3 0
% 3 2205 1845 2205 1945 5 0
% $v_5$
\put(22.0500,-19.4500){\makebox(0,0){$v_5$}}%
% STR 2 0 3 0
% 3 1805 1845 1805 1945 5 0
% $v_6$
\put(18.0500,-19.4500){\makebox(0,0){$v_6$}}%
% STR 2 0 3 0
% 3 1405 1845 1405 1945 5 0
% $v_7$
\put(14.0500,-19.4500){\makebox(0,0){$v_7$}}%
% STR 2 0 3 0
% 3 1005 1845 1005 1945 5 0
% $v_8$
\put(10.0500,-19.4500){\makebox(0,0){$v_8$}}%
% LINE 2 0 3 0
% 2 1000 1400 1400 1800
% 
\special{pn 8}%
\special{pa 1000 1400}%
\special{pa 1400 1800}%
\special{fp}%
% LINE 2 0 3 0
% 2 1400 1400 1800 1800
% 
\special{pn 8}%
\special{pa 1400 1400}%
\special{pa 1800 1800}%
\special{fp}%
% LINE 2 0 3 0
% 2 1800 1400 2200 1800
% 
\special{pn 8}%
\special{pa 1800 1400}%
\special{pa 2200 1800}%
\special{fp}%
\end{picture}%
\caption{$\Zc_4$}
\label{fig:pencilZ}
\end{minipage}
\end{figure}

\begin{thm}\label{thm:forbid_claw_k4}
If a $2$-connected graph $G$ is $K_{1,3}$-free and $K_4$-minor-free, then $G$ is outerplanar. 
Furthermore, $G$ is isomorphic to $\Zc_d, \Zc^*_d$ or a graph in $\Sc_t$.
\end{thm}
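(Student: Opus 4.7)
The plan is to establish the theorem in two stages: first prove outerplanarity, and then carry out the structural classification.

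\emph{Outerplanarity.} I would invoke the characterization of outerplanar graphs as the $\{K_4, K_{2,3}\}$-minor-free graphs; since $K_4$-minor-freeness is assumed, it suffices to rule out a $K_{2,3}$ minor. Because $K_{2,3}$ has maximum degree $3$, any $K_{2,3}$-minor of $G$ lifts to a topological one. So suppose for contradiction $G$ contains a $K_{2,3}$-subdivision with branch vertices $a, b$ and internally disjoint $(a,b)$-paths $P_1, P_2, P_3$ of length $\geq 2$. Let $a_i$ be the neighbor of $a$ on $P_i$. The $K_{1,3}$-freeness at $a$ forces an edge among $\{a_1, a_2, a_3\}$, say $a_1 a_2 \in E(G)$. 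The four branch sets $\{a\}$, $\{b\} \cup \mathrm{int}(P_3)$, $\mathrm{int}(P_1)$, $\mathrm{int}(P_2)$ are then pairwise joined by edges of $G$ (the last pair via $a_1 a_2$), exhibiting a $K_4$ minor and contradicting the hypothesis. Hence $G$ is outerplanar.

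\emph{Classification.} Now $G$ is $2$-connected outerplanar; its outer face is bounded by a Hamilton cycle $C$ and the remaining edges are non-crossing chords. Let $T_G$ denote the dual tree. The analysis rests on two local consequences of claw-freeness. First, if two adjacent inner faces $F, F'$ both have size $\geq 4$ and share an edge $uv$, then the three vertices $v$, the other $F$-neighbor of $u$, and the other $F'$-neighbor of $u$ are pairwise non-adjacent in $G$ (non-adjacency of the first two follows from $|F| \geq 4$, since such an edge would split $F$; symmetrically for the second pair; and the last pair cannot be joined without creating a new inner face), yielding a claw at $u$. Second, if a face $F$ has degree $\geq 3$ in $T_G$, then every neighbour $F'$ of $F$ in $T_G$ is a triangular leaf: a non-triangular $F'$ is forbidden either directly by the first observation or (when $|F|=3$) by an analogous claw at the vertex of $F$ opposite the shared edge, and a triangular $F'$ that is itself extended to another face in $T_G$ creates a claw at the vertex of $F$ shared with $F'$.

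Together these observations pin $T_G$ down to either a star or a path. In the star case the centre $F^*$ has some size $t$ and its triangular leaves are attached to a subset of its boundary edges, recovering $\Sc_t$ with the leaves in the role of the $r_i$'s. In the path case, the first observation forces every internal node of $T_G$ to be triangular once the path has at least three nodes, and a local check at each hinge vertex shows that consecutive triangles can be glued in only one way without producing a degree-$\geq 4$ vertex bearing an induced $K_{1,3}$: the ``zigzag'' attachment underlying $\Lc_d$. The resulting graph is $\Zc_d$ when $T_G$ has $2d-1$ triangles and $\Zc_d^*$ when it has $2d-2$; short paths of at most three nodes (or paths of length two with a non-triangular internal face) degenerate back into small members of $\Sc_t$.

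The main obstacle will be the last rigidity step: for each alternative, non-zigzag gluing of a new triangle onto the current chain, one has to exhibit the precise triple of neighbors of the newly created degree-$\geq 4$ vertex that is pairwise non-adjacent — using outerplanarity to rule out potential ``saving'' chords — thereby certifying an induced $K_{1,3}$ and forcing the zigzag. The rest of the argument (Part 1 and the star case) is comparatively straightforward.
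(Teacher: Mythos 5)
Your proof of the first assertion is correct and takes a genuinely different---and cleaner---route than the paper's. The paper establishes outerplanarity by embedding $G$ so that the outer cycle is longest (using the $2$-tree structure of $K_4$-minor-free graphs) and then running a long case analysis in which an edge ``jumps'' over an interior vertex to lengthen that cycle. You instead stay in minor language: since $\Delta(K_{2,3})=3$ a $K_{2,3}$-minor yields a subdivision, i.e.\ three internally disjoint $(a,b)$-paths of length $\geq 2$; claw-freeness at $a$ forces an edge $a_1a_2$, and the branch sets $\{a\}$, $\mathrm{int}(P_1)$, $\mathrm{int}(P_2)$, $\{b\}\cup\mathrm{int}(P_3)$ are connected, disjoint, and pairwise joined, giving a $K_4$-minor. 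I verified all six connecting edges; this is complete and shorter than the published argument. For the classification the paper splits on whether two triangles share an edge (yielding $\Zc_d,\Zc_d^*$) or not (yielding $\Sc_t$), while you organize everything around the dual tree; both are viable, and your two observations (no two adjacent faces of size $\geq4$; a face of dual degree $\geq3$ has only triangular leaf neighbours) are true and do force $T_G$ to be a star or a path.

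The path case, however, contains a genuine gap. You claim that observation 1 ``forces every internal node of $T_G$ to be triangular once the path has at least three nodes.'' It does not: observation 1 only forbids two \emph{adjacent} large faces, so a path $T_1-Q-T_2$ with $|Q|\geq4$ and triangular leaves passes it---and such graphs really are claw-free (they are members of $\Sc_{|Q|}$ with two attached triangles), so your escape clause for three-node paths is mandatory, while for longer paths the exclusion of a large internal face needs a lemma you never state: \emph{a face of size $\geq4$ cannot be adjacent in $T_G$ to a triangle that is itself internal}. (If $Q$ shares $uv$ with the triangle $uvw$ and $vw$ is shared with a third face, then $u$, the other $\partial Q$-neighbour $q$ of $v$, and the other neighbour $f$ of $v$ across $vw$ are pairwise non-adjacent---$uq$ by $|Q|\geq4$, $uf$ by the separating chord $vw$, $qf$ because such an edge would cut off $v$---so $v$ carries an induced claw.) Only with this does a path on $\geq4$ nodes become all-triangular. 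A smaller slip: in observation 2 with $|F|=3$ the claw is not at the vertex of $F$ opposite the shared edge (when the other two neighbours of $F$ are triangles that vertex has no induced claw at all); it sits at an endpoint of the shared edge. Finally, the zigzag rigidity you defer is real work---it is essentially the paper's Case 1---but your plan of exhibiting the independent triple at the newly created high-degree vertex is the right way to carry it out.
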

\begin{proof}
Let $G$ be a 2-connected $K_{1,3}$-free and $K_4$-minor-free graph.
Note that $G$ has no vertex of degree at least~5;
otherwise, $G$ has an induced $K_{1,3}$ or a $K_4$-minor consisting of the vertex and its neighbors.
Thus, $\deg_G(x) \leq 4$ for every vertex $x$ of $G$,
where $\deg_G(x)$ denotes the degree of a vertex $x$ in $G$.

It is well known that every $K_4$-minor-free graph is a subgraph of a 2-tree~\cite{Dirac},
where a {\it $2$-tree} is a graph obtained from a triangle by repeatedly adding vertices
in such a way that each added vertex has two adjacent neighbors (i.e., those three vertices induce a triangle).
Hence, since every 2-tree is planar,
we embed $G$ into the plane in such a way that the length of the boundary cycle of the outer face is the longest 
among all planar embeddings of $G$.

Let $C=u_0u_1 \dots u_{m-1}$ for some $m \geq 3$ be the outer cycle of $G$.
If all vertices of $G$ lie on $C$, then $G$ is outerplanar.
Thus, we may suppose that $u_i \in V(C)$ has degree~3 or~4
and to the contrary that $u_i$ has a neighbor not lying on $C$.
Let $K_4(a,b,c,d)$ denote a $K_4$-minor in $G$ 
consisting of four vertices $a,b,c,d$ and internally disjoint paths between them,
where two paths are {\it internally disjoint} if they do not share vertices except their end vertices.
%In what follows, we lead a contradiction if there is at least one neighbor of $u_i$ which does not lie on $C$. 

First suppose $\deg_G(u_i) = 3$.
Let $a$ be a unique neighbor of $u_i$ which does not lie on $C$.
A path $P$ between two vertices $x$ and $y$ 
in which any vertex of $P$ does not lie on $C$ except $x,y$ 
is called an {\it inner $(x,y)$-path}.
Since $G$ is $K_{1,3}$-free, 
we may assume by symmetry that $u_{i-1}a \in E(G)$ or $u_{i-1}u_{i+1} \in E(G)$.
Moreover, since $G$ is $K_4$-minor-free, 
there is at most one of an inner $(a,u_{i-1})$-path and an inner $(a,u_{i+1})$-path, say the former.

\begin{figure}[htb]
\centering
\input{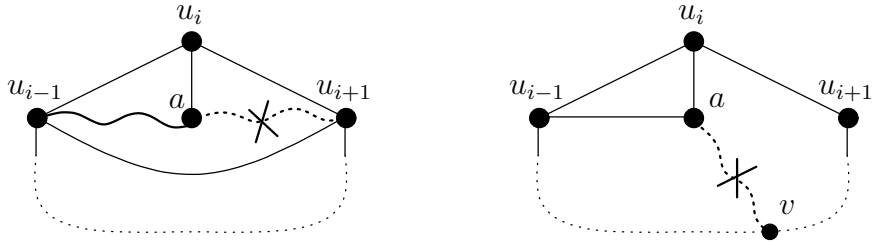}
\caption{The case when $\deg_G(u_i) = 3$}
\label{t41-deg3}
\end{figure}

See Figure~\ref{t41-deg3}.
The left depicts the case when $u_{i-1}u_{i+1} \in E(G)$ and there is an inner $(a,u_{i-1})$-path.
In this case, if there is also an inner $(a,u_{i+1})$-path, 
then $G$ has a $K_4$-minor $K_4(u_{i-1},u_i,u_{i+1},a)$, a contradiction.
The right depicts the case when $u_{i-1}a \in E(G)$ but $u_{i-1}u_{i+1} \notin E(G)$.
In this case, if there is an inner $(a,v)$-path for some $v \in V(C) \setminus \{u_{i-1},u_{i}\}$, 
then $G$ has a $K_4$-minor $K_4(u_{i-1},u_i,v,a)$.
Thus, in either case,
we can obtain another outerplanar embedding of $G$ with outer cycle longer than $C$,
by making the outer cycle pass $u_ia$ and an inner $(a,u_{i-1})$-path instead of $u_{i-1}u_{i}$.
Intuitively, we can obtain this embedding 
by applying a ``jump" of the edge $u_{i-1}u_i$ over $a$ as shown in Figure~\ref{t41-jump}.
This contradicts that $C$ is the longest outer cycle.

\begin{figure}[htb]
\centering
\input{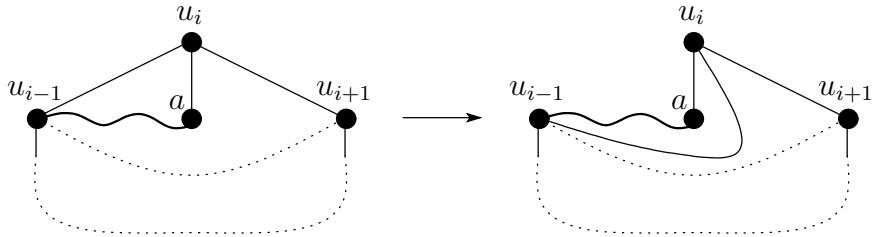}
\caption{A ``jump" of $u_{i-1}u_i$}
\label{t41-jump}
\end{figure}

Next suppose $\deg_G(u_i) = 4$.
Let $a,b$ be neighbors of $u_i$ with $a,b \notin \{u_{i-1},u_i\}$.
By symmetry, we first consider the case when $b \in V(C)$ (see Figure~\ref{t41-deg4-b}).
Since four vertices $u_{i-1},u_i,u_{i+1},a$ do not induce a $K_{1,3}$, $u_{i-1}a \in E(G)$.
Thus, $G$ has no inner $(a,v)$-path for any $v \in V(C) \setminus \{u_{i-1},u_{i}\}$;
otherwise $G$ has a $K_4$-minor $K_4(u_{i-1},u_i,a,v)$.
Therefore, similarly to the case when $\deg_G(u_i) = 3$,
we can obtain another outerplanar embedding of $G$ with outer cycle longer than $C$
by applying a ``jump" of the edge $u_{i-1}u_i$ over $a$.

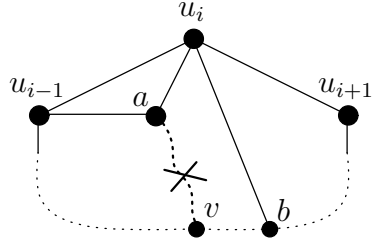
\begin{figure}[htb]
\centering
%WinTpicVersion3.08
\unitlength 0.1in
\begin{picture}( 20.6500, 12.6000)( 15.9000,-20.4000)
% LINE 1 0 3 0
% 2 2690 1800 2820 1650
% 
\special{pn 13}%
\special{pa 2690 1800}%
\special{pa 2820 1650}%
\special{fp}%
% LINE 1 0 3 1
% 2 2653 1689 2870 1744
% 
\special{pn 13}%
\special{pa 2654 1690}%
\special{pa 2870 1744}%
\special{fp}%
% STR 2 0 3 0
% 3 2530 1220 2530 1320 5 0
% $a$
\put(25.3000,-13.2000){\makebox(0,0){$a$}}%
% CIRCLE 2 0 0 0
% 4 2005 1405 1965 1435 1965 1425 1965 1425
% 
\special{pn 8}%
\special{sh 1.000}%
\special{ar 2006 1406 50 50  0.0000000 6.2831853}%
% CIRCLE 2 0 0 0
% 4 2805 1005 2765 1035 2765 1025 2765 1025
% 
\special{pn 8}%
\special{sh 1.000}%
\special{ar 2806 1006 50 50  0.0000000 6.2831853}%
% CIRCLE 2 0 0 0
% 4 3605 1405 3565 1435 3565 1425 3565 1425
% 
\special{pn 8}%
\special{sh 1.000}%
\special{ar 3606 1406 50 50  0.0000000 6.2831853}%
% LINE 2 0 3 0
% 2 3605 1405 2805 1005
% 
\special{pn 8}%
\special{pa 3606 1406}%
\special{pa 2806 1006}%
\special{fp}%
% LINE 2 0 3 0
% 2 2805 1005 2005 1405
% 
\special{pn 8}%
\special{pa 2806 1006}%
\special{pa 2006 1406}%
\special{fp}%
% CIRCLE 2 0 0 0
% 4 2610 1410 2570 1440 2570 1430 2570 1430
% 
\special{pn 8}%
\special{sh 1.000}%
\special{ar 2610 1410 50 50  0.0000000 6.2831853}%
% STR 2 0 3 0
% 3 2795 765 2795 865 5 0
% $u_i$
\put(27.9500,-8.6500){\makebox(0,0){$u_i$}}%
% STR 2 0 3 0
% 3 1995 1165 1995 1265 5 0
% $u_{i-1}$
\put(19.9500,-12.6500){\makebox(0,0){$u_{i-1}$}}%
% STR 2 0 3 0
% 3 3595 1165 3595 1265 5 0
% $u_{i+1}$
\put(35.9500,-12.6500){\makebox(0,0){$u_{i+1}$}}%
% LINE 2 0 3 0
% 2 2000 1400 2000 1600
% 
\special{pn 8}%
\special{pa 2000 1400}%
\special{pa 2000 1600}%
\special{fp}%
% LINE 2 0 3 0
% 2 3600 1600 3600 1400
% 
\special{pn 8}%
\special{pa 3600 1600}%
\special{pa 3600 1400}%
\special{fp}%
% SPLINE 2 2 3 0
% 6 3600 1600 3600 1800 2800 2000 2000 1800 2000 1600 2000 1600
% 
\special{pn 8}%
\special{pa 3600 1600}%
\special{pa 3604 1634}%
\special{pa 3606 1668}%
\special{pa 3608 1702}%
\special{pa 3608 1734}%
\special{pa 3606 1764}%
\special{pa 3602 1794}%
\special{pa 3596 1820}%
\special{pa 3586 1844}%
\special{pa 3574 1866}%
\special{pa 3558 1886}%
\special{pa 3540 1904}%
\special{pa 3522 1920}%
\special{pa 3500 1934}%
\special{pa 3474 1946}%
\special{pa 3448 1958}%
\special{pa 3420 1966}%
\special{pa 3390 1974}%
\special{pa 3360 1982}%
\special{pa 3326 1986}%
\special{pa 3292 1992}%
\special{pa 3256 1994}%
\special{pa 3218 1998}%
\special{pa 3180 2000}%
\special{pa 3140 2000}%
\special{pa 3100 2002}%
\special{pa 3058 2002}%
\special{pa 3016 2002}%
\special{pa 2974 2002}%
\special{pa 2932 2002}%
\special{pa 2888 2000}%
\special{pa 2844 2000}%
\special{pa 2802 2000}%
\special{pa 2758 2000}%
\special{pa 2714 2000}%
\special{pa 2672 2002}%
\special{pa 2628 2002}%
\special{pa 2586 2002}%
\special{pa 2544 2002}%
\special{pa 2502 2002}%
\special{pa 2462 2000}%
\special{pa 2424 2000}%
\special{pa 2384 1998}%
\special{pa 2348 1996}%
\special{pa 2312 1992}%
\special{pa 2276 1988}%
\special{pa 2244 1982}%
\special{pa 2212 1974}%
\special{pa 2182 1966}%
\special{pa 2154 1958}%
\special{pa 2128 1946}%
\special{pa 2102 1934}%
\special{pa 2080 1920}%
\special{pa 2060 1904}%
\special{pa 2044 1886}%
\special{pa 2028 1866}%
\special{pa 2016 1846}%
\special{pa 2006 1822}%
\special{pa 2000 1794}%
\special{pa 1996 1766}%
\special{pa 1994 1736}%
\special{pa 1994 1704}%
\special{pa 1994 1670}%
\special{pa 1998 1636}%
\special{pa 2000 1602}%
\special{pa 2000 1600}%
\special{sp -0.045}%
% LINE 2 0 3 0
% 2 2610 1410 2810 1010
% 
\special{pn 8}%
\special{pa 2610 1410}%
\special{pa 2810 1010}%
\special{fp}%
% STR 2 0 3 0
% 3 3270 1800 3270 1900 5 0
% $b$
\put(32.7000,-19.0000){\makebox(0,0){$b$}}%
% LINE 2 0 3 0
% 2 2000 1400 2600 1400
% 
\special{pn 8}%
\special{pa 2000 1400}%
\special{pa 2600 1400}%
\special{fp}%
% SPLINE 1 2 3 0
% 10 2620 1410 2691 1541 2698 1605 2707 1677 2744 1721 2789 1787 2798 1859 2801 1927 2840 2010 2840 2010
% 
\special{pn 13}%
\special{pa 2620 1410}%
\special{pa 2640 1438}%
\special{pa 2658 1464}%
\special{pa 2674 1492}%
\special{pa 2686 1522}%
\special{pa 2694 1552}%
\special{pa 2698 1584}%
\special{pa 2698 1616}%
\special{pa 2700 1650}%
\special{pa 2708 1680}%
\special{pa 2728 1704}%
\special{pa 2750 1728}%
\special{pa 2772 1754}%
\special{pa 2786 1780}%
\special{pa 2796 1812}%
\special{pa 2798 1844}%
\special{pa 2798 1876}%
\special{pa 2798 1908}%
\special{pa 2804 1938}%
\special{pa 2816 1968}%
\special{pa 2832 1996}%
\special{pa 2840 2010}%
\special{sp -0.045}%
% CIRCLE 2 0 0 0
% 4 3200 2000 3240 2000 3240 2000 3240 2000
% 
\special{pn 8}%
\special{sh 1.000}%
\special{ar 3200 2000 40 40  0.0000000 6.2831853}%
% LINE 2 0 3 0
% 2 3200 2000 2800 1000
% 
\special{pn 8}%
\special{pa 3200 2000}%
\special{pa 2800 1000}%
\special{fp}%
% CIRCLE 2 0 0 0
% 4 2820 2000 2860 2000 2860 2000 2860 2000
% 
\special{pn 8}%
\special{sh 1.000}%
\special{ar 2820 2000 40 40  0.0000000 6.2831853}%
% STR 2 0 3 0
% 3 2900 1800 2900 1900 5 0
% $v$
\put(29.0000,-19.0000){\makebox(0,0){$v$}}%
\end{picture}%
\caption{The case when $\deg_G(u_i) = 4$ and $b \in V(C)$}
\label{t41-deg4-b}
\end{figure}

We next consider the case when neither $a$ nor $b$ lies on $C$.
Any four vertices in $\{u_{i-1},u_i,u_{i+1},a,b\}$ do not induce a $K_{1,3}$,
we have one of the following configurations by symmetry:
(1) $u_{i-1}u_{i+1}, ab \in E(G)$, (2) $u_{i-1}a, ab \in E(G)$ but $u_{i-1}u_{i+1} \notin E(G)$, 
and (3) $u_{i-1}a, bu_{i+1} \in E(G)$ (see Figure~\ref{t41-deg4}).
In the case (1), there is at most one of an inner $(a,u_{i-1})$-path and an inner $(b,u_{i+1})$-path,
and so we may assume that there is the former one (see the left of Figure~\ref{t41-deg4}).

\begin{figure}[htb]
\centering
\input{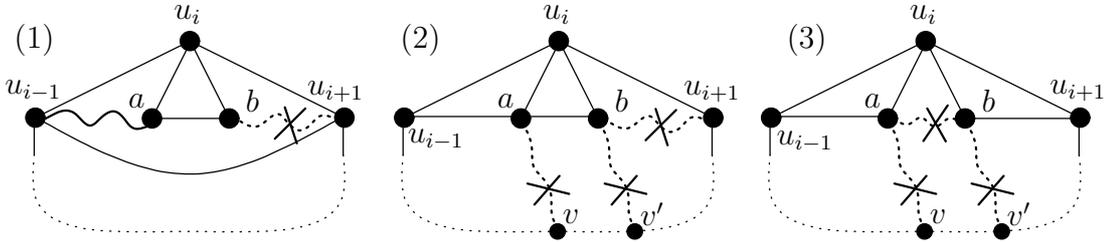}
\caption{The case when $\deg_G(u_i) = 4$}
\label{t41-deg4}
\end{figure}

By similar arguments in the case when $\deg_G(u_i) = 3$, as depicted in Figure~\ref{t41-deg4},
(1) there is no inner $(b,u_{i+1})$-path,
(2) there is neither an inner $(a,v)$-path, an inner $(b,v')$-path nor an inner $(b,u_{i+1})$-path,
(3) there is neither an inner $(a,v)$-path, an inner $(b,v')$-path nor an inner $(a,b)$-path,
where $v,v' \in V(C) \setminus \{ u_{i-1}, u_i, u_{i+1} \}$.
Thus, similarly to the above cases,
we can obtain another outerplanar embedding of $G$ with outer cycle longer than $C$,
by applying a ``jump" of the edge $u_{i-1}u_i$ over $a,b$ in cases (1) and (2) and over only $a$ in case (3).

Therefore, we can conclude that $G$ is outerplanar.

\medskip

Next, we show that $G$ is isomorphic to $\Zc_d, \Zc^*_d$ or a graph in $\Sc_t$.
If $|V(G)| \leq 4$, then we can easily verify that the theorem holds,
and hence, we assume that $|V(G)| \geq 5$.
Note that every vertex of degree at least~3 in a $K_{1,3}$-free graphs belongs to at least one triangle.
Thus, we divide the proof into the following two cases.

\medskip
\noindent
{\bf Case 1.} There is a pair of triangles sharing an edge.

Let $xuv$ and $uvy$ be two triangles sharing an edge $uv$.
If $\deg_G(u) = \deg_G(v) = 3$, then $|V(G)| \leq 4$ 
since $G$ is 2-connected and outerplanar, a contradiction.
Thus, at least one of $u$ and $v$ is of degree~4, say $u$.
Let $w_1$ be a neighbor of $u$ other than $v,x,y$.
Since $G$ is $K_{1,3}$-free, $w_1$ is adjacent to $y$ by symmetry.
If $\deg_G(v) = \deg_G(y) = 3$, then $G \cong \Zc_2$ and if there is a triangle $vyz$, then $G \in \Sc_3$.
Thus, $\deg_G(v) = 4$ or $\deg_G(y) = 4$ but there is no triangle sharing $vy$ with $uvy$.

We consider only the case when $\deg_G(v) = 3$ and $\deg_G(y) = 4$,
since two other cases (1) $\deg_G(v) = \deg_G(y) = 4$ and (2) $\deg_G(v) = 4$ and $\deg_G(y) = 3$ may be similarly proved.
Let $w_2$ be the fourth neighbor of $y$ (other than $u,v,w_1$).
Since $G$ is $K_{1,3}$-free and has no triangle sharing $vy$ with $uvy$, $G$ has a triangle $w_1w_2y$;
observe that $w_2$ can be adjacent to neither $x$ nor $v$ by the outerplanarity of $G$.
Then we next consider whether $\deg(w_1)=4$.
By repeating this argument, we can conclude that $G \cong \Zc_d$ or $G \cong \Zc^*_d$ for some $d \geq 2$.

\medskip
\noindent
{\bf Case 2.} Otherwise.

We may assume that any two triangles of $G$ do not share an edge.
In this case, we show that $G \in \Sc_t$ for some $t$.
Let $\partial G = u_0u_1 \dots u_{n-1}$.
Since it follows from what $G$ is $K_{1,3}$-free that any chord of $G$ is contained in a triangle, 
it suffices to show that every triangle of $G$ is $u_{i-1}u_iu_{i+1}$,
that is, the three vertices are consecutive on $\partial G$, where the subscripts are modulo $n$.
Suppose to the contrary that there is a triangle $u_iu_ju_k$ with $|j-i| \geq 2$, $j \neq i-2$ and $i < j < k$.
In this case, we have $\deg_G(u_i)=4$ or $\deg_G(u_j)=4$, and hence, 
without loss of generality, we suppose $\deg_G(u_i)=4$.
Similarly to the proof of Case~1, $u_j$ (or $u_k$) has to adjacent to $u_{i+1}$ (or $u_{i-1}$),
which contradicts that any two triangles do not share an edge.
Therefore, the three vertices of each triangle of $G$ are consecutive on $\partial G$,
which implies that $G \in \Sc_t$ for some $t$, since any two triangles share at most one vertex.
\end{proof}

By Theorem~\ref{thm:forbid_claw_k4}, we have the following corollary.

\begin{cor}\label{cor:claw_k4}
If a $2$-connected graph $G$ is $K_{1,3}$-free and $K_4$-minor-free,
then both $c_o(G)/c_e(G)$ and $c_e(G)/c_o(G)$ are bounded by a constant unless $G$ is a cycle.
\end{cor}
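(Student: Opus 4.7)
The plan is to use Theorem~\ref{thm:forbid_claw_k4} to reduce the corollary to three concrete families of graphs, and then bound both $c_o(G)/c_e(G)$ and $c_e(G)/c_o(G)$ in each family---via Theorem~\ref{thm:oddfaces} for $\Zc_d$ and $\Zc^*_d$, and by a direct enumeration of cycles for graphs in $\Sc_t$. By Theorem~\ref{thm:forbid_claw_k4}, $G$ is isomorphic to $\Zc_d$, $\Zc^*_d$, or a graph in $\Sc_t$; the only members of these families that are themselves cycles are $\Zc_1\cong C_3$ and the unique element of $\Sc_t$ with no added vertices $r_i$ (namely $C_t$), and these are excluded by hypothesis, so in what follows I may assume $G$ is not a cycle.

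For $G\cong\Zc_d$ with $d\geq 2$, I would first observe that every finite face of $\Zc_d$ is a triangle: each of the $d-1$ quadrilateral faces of $\Lc_d$ is subdivided by the added chord $v_iv_{2d-1-i}$ into two triangles, and the central face of $\Lc_d$ is already the triangle $v_{d-1}v_dv_{d+1}$, so $|F(\Zc_d)|=2d\geq 3$. Deleting $v_d$ from $\Zc_d$ destroys only this central triangle, so $\Zc^*_d$ (with $d\geq 2$) still has all finite faces triangular and $|F(\Zc^*_d)|=2d-1\geq 3$. Theorem~\ref{thm:oddfaces} then yields $1\leq c_o(G)/c_e(G)\leq 2$, and consequently $c_e(G)/c_o(G)\leq 1$, so both ratios are bounded by the constant $2$.

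For $G\in\Sc_t$ with $t'\geq 1$ added vertices $r_i$, the dual tree $T_G$ is a star: its center corresponds to the face bounded by $C_t$ and its leaves are the pendant triangular faces $v_iv_{i+1}r_i$. Each leaf on its own gives a pendant triangle cycle of length $3$, contributing $t'$ odd cycles; each subtree containing the center corresponds to a cycle around the outer boundary obtained from $C_t$ by inserting detours $v_i\to r_i\to v_{i+1}$ at some subset $S'\subseteq S$ of pendant positions, of length $t+|S'|$, giving $2^{t'}$ such ``boundary cycles''. Since $\sum_{k\text{ even}}\binom{t'}{k}=\sum_{k\text{ odd}}\binom{t'}{k}=2^{t'-1}$ for $t'\geq 1$, exactly half of these boundary cycles are odd and half even, irrespective of the parity of $t$, so $c_o(G)=2^{t'-1}+t'$ and $c_e(G)=2^{t'-1}$; hence $c_o(G)/c_e(G)=1+t'/2^{t'-1}\leq 2$ and $c_e(G)/c_o(G)\leq 1$. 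The main obstacle is not conceptual but rather bookkeeping---correctly identifying the finite faces of $\Zc_d$ and $\Zc^*_d$ so that Theorem~\ref{thm:oddfaces} applies, and handling the parity of $t$ uniformly in the $\Sc_t$ count---but once Theorem~\ref{thm:forbid_claw_k4} has done the structural work, the uniform constant~$2$ falls out across all three families.
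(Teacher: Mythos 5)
Your proof is correct and follows the same overall strategy as the paper: reduce to $\Zc_d$, $\Zc^*_d$ and the graphs in $\Sc_t$ via Theorem~\ref{thm:forbid_claw_k4}, then apply Theorem~\ref{thm:oddfaces} to the first two families after checking that all their finite faces are triangles. The only point of divergence is the $\Sc_t$ case: the paper simply invokes Proposition~\ref{prop:dualstar} (the dual tree is a star with at least one triangular leaf face), whereas you enumerate the cycles directly, obtaining $c_o(G)=2^{t'-1}+t'$ and $c_e(G)=2^{t'-1}$ for $t'\geq 1$ pendant triangles and hence the explicit uniform bound $2$. Your version is, if anything, slightly more careful as written: Proposition~\ref{prop:dualstar} only asserts that the ratios converge to $1$ as $|F(G)|\to\infty$, so to get a constant bound valid for \emph{all} $t$ and $t'$ one must extract the explicit formulas from its proof --- which is precisely the computation you carry out. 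Both routes are sound; yours makes the constant explicit across all three families.
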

\begin{proof}
Let $G$ be a $2$-connected $K_{1,3}$-free and $K_4$-minor-free graph.
By Theorem~\ref{thm:forbid_claw_k4}, $G$ is isomorphic to $\Zc_d, \Zc^*_d$ or a graph in $\Sc_t$.
If $G$ is isomorphic to $\Zc_d$ or $\Zc^*_d$, 
then we are done by Theorem~\ref{thm:oddfaces} since every finite face of $G$ is triangular.
If $G \in \Sc_t$ and $G$ is not a cycle, then we are also done by Proposition~\ref{prop:dualstar}
since there is at least one triangular face corresponding to a leaf of a dual tree.
\end{proof}

As noted in the proof of Theorem~\ref{thm:forbid_claw_k4},
every vertex of degree at least~3 in a $K_{1,3}$-free graph belongs to a triangle.
Intuitively, 
if there is a cycle $C$ passing exactly one edge of a triangle $xyz$, say $xy$,
then there is another cycle $C'$ passing $xzy$ (ignoring the details whether $C$ passes $z$).
It is notable that the parity of the lengths of $C$ and that of $C'$ are different,
that is, we guess that 
the number of odd cycles in a 2-connected $K_{1,3}$-free graph is almost the same as that of even cycles.
Therefore, we conclude this section proposing the following challenging conjecture.
(Note that we can easily construct infinitely many non-2-connected $K_{1,3}$-free graphs $G$
such that $c_o(G)/c_e(G) \to \infty$ as $|V(G)| \to \infty$.)

\begin{conj}
For any $2$-connected $K_{1,3}$-free graph $G$, 
both $c_o(G)/c_e(G)$ and $c_e(G)/c_o(G)$ are bounded by a constant unless $G$ is a cycle.
\end{conj}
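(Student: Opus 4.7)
The plan is to upgrade the ``triangle swap'' heuristic stated just before the conjecture into a bounded-to-one, parity-reversing correspondence between odd and even cycles of $G$. The starting point is that, because $G$ is $K_{1,3}$-free, every vertex of degree at least~$3$ has two adjacent neighbors and hence lies in a triangle. Two local moves on a cycle $C$ flip the parity of $|C|$: the \emph{expansion} replaces a single edge $xy$ of $C$ by the two-edge path $x\,z\,y$ through a triangle $xyz$ with $z \notin V(C)$, and the \emph{contraction} replaces a length-$2$ subpath $x\,z\,y$ of $C$ (with $z$ of degree~$2$ in $C$) by the chord $xy$ whenever $xy \in E(G)$.

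The first substantive step I would carry out is a structural lemma: if $G$ is a $2$-connected $K_{1,3}$-free graph that is not a cycle, then every cycle $C$ of $G$ of length at least~$4$ admits either a legal expansion or a legal contraction. The idea is that each edge of $C$ incident to a vertex of degree $\geq 3$ in $G$ sits inside a triangle, whose third vertex can be classified by its relation to $V(C)$ (off $C$, on $C$ but isolated from the used edges, or sharing a further edge of $C$), and a pigeonhole/parity argument along $C$ should produce at least one move of the desired type. Triangles themselves (the cycles $C$ with $|C|=3$) must be handled separately, contributing only $O(\text{number of triangles})$ to $c_o(G)$, which should be absorbed into the final bound once longer cycles are counted.

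Second, I would fix a total order on the triangles of $G$, let $T_C$ be the smallest triangle that witnesses a legal move at $C$, and let $\Phi$ be the resulting parity-reversing map on the set of cycles of length $\geq 4$. The key difficulty, and where I expect the real work to lie, is proving that $\Phi$ is bounded-to-one with an \emph{absolute} constant independent of $G$: given $D = \Phi(C)$, both the triangle used and the inverse move must be recovered, and a naive count grows with $\Delta(G)$, which is not bounded for general claw-free graphs. The most promising routes forward seem to be either an induction on $|E(G)|$ via a triangle-peeling ear-like decomposition combined with Observation~\ref{obs:addedge} to track $c_o$ and $c_e$ through each reduction, or an appeal to a structural classification of $2$-connected claw-free graphs (generalizing the characterization produced for the $K_4$-minor-free subcase in Theorem~\ref{thm:forbid_claw_k4}). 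As a sanity check, one should first verify the bound on the family $G = K_n$ using the exact formula $\binom{n}{k}(k-1)!/2$ for $k$-cycles, since the complete graphs are the natural case where claw-freeness gives no extra leverage and where the constant will likely be attained in the limit.
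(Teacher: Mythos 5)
You should first note that the paper does not prove this statement: it is posed explicitly as an open conjecture (``we conclude this section proposing the following challenging conjecture''), and the only thing the authors offer in its support is the informal triangle-swap heuristic in the preceding paragraph. So there is no proof in the paper to compare against, and any correct argument here would be a new result rather than a reconstruction.

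Your proposal is an honest research plan, but it is not a proof, and you yourself identify the step that is missing. Two gaps are genuine. First, the ``structural lemma'' of your first step is not established and is not obviously true: a vertex $v$ of degree $\geq 3$ on a cycle $C$ does lie in some triangle, but that triangle may use two neighbours of $v$ off $C$ (so the expansion at an edge of $C$ is unavailable), its third vertex may already lie elsewhere on $C$ (so the ``expansion'' does not produce a cycle), and the contraction move requires the specific chord $xy$ for a length-$2$ subpath $x\,z\,y$ of $C$, which claw-freeness does not supply. Second, and more seriously, even granting a well-defined parity-reversing map $\Phi$, the bound you would get on the fibre of $\Phi$ over a cycle $D$ is the number of candidate inverse moves along $D$, which is of order $|D|$ (and in graphs with large cliques, such as $K_n$, each edge lies in $n-2$ triangles). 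That yields $c_o(G)/c_e(G) = O(|V(G)|)$, not a constant, and no mechanism in the proposal reduces this to an absolute bound. Your $K_n$ sanity check does pass --- the $k$-cycle counts $\binom{n}{k}(k-1)!/2$ for $k=n, n-1, n-2, \dots$ decay slowly enough that consecutive parities nearly cancel and the ratio tends to $1$ --- but this is verified by direct computation, not by the correspondence $\Phi$, which is precisely the sign that the correspondence as designed does not capture why the conjecture should hold. As it stands the proposal formalizes the authors' heuristic without closing the gap that makes the statement a conjecture.
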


\section{Concluding remarks}\label{sec:remarks}

Throughout this paper, we addressed only 2-connected outerplanar graphs.
If an outerplanar graph $G$ is not 2-connected,
then $G$ consists of several blocks such that each two blocks share at most one vertex,
where a {\it block} of $G$ is a maximal 2-connected subgraph of $G$.
Note that each cycle in $G$ consists of edges in exactly one block.
Therefore, by applying our results to each block, 
we can obtain the corresponding results for any non-2-connected outerplanar graphs.
(We do not specifically write the statements of the corresponding results since it is just a tedious routine.)

For several 2-connected outerplanar graphs $G$ with prescribed dual trees,
a path, a star and a broom (the dual tree of $\Tc_n$),
we can show the results on the sharpness of the ratio of the numbers of odd and even cycles. 
%where a {\it broom} is a graph obtained from a path and a star by identifying one end of the path and a leaf of the star.
It is a natural open problem to evaluate the ratio 
for 2-connected outerplanar graphs with other dual trees.
In particular, it is of interest to evaluate the ratio using the number of leaves of a dual tree.

For other particular graph classes, e.g., planar graphs, 
the analysis of the ratios seems more difficult and complicated 
even if the graph is a planar triangulation. 
Thus, we need to find some reasonable assumption or forbidden subgraphs/minors conditions for such graph classes,
and we believe that the study on this problem will give a huge contribution to the graph theory.


\begin{thebibliography}{10}

\bibitem{AAT2020} A.~Alahmadi, R.E.L.~Aldred and C.~Thomassen,
Cycles in 5-connected triangulations,
{\it J. Combin. Theory Ser. B} {\bf 140} (2020), 27--44.

\bibitem{AlB2018} B.F.~AlBdaiwi,
On the number of cycles in a graph,
{\it Math. Slovaca} {\bf 68} (2018), 1--10.

\bibitem{AGT2016} A.~Arman, D.S.~Gunderson and S.~Tsaturian,
Triangle-free graphs with the maximum number of cycles,
{\it Discrete Math.} {\bf 339} (2016), 699--711.

\bibitem{BL1946} G.D.~Birkhoff and D.C.~Lewis,
Chromatic polynomials,
{\it Trans. Amer. Math. Soc.} {\bf 60} (1946), 355--451.

\bibitem{BMGT} J.A. Bondy and U.S.R. Murty
Graph Theory, 
{\it Grad. Texts in Math.} {\bf 244} Springer (2008).

\bibitem{BSV2018} G.~Brinkmann, J.~Souffriau, N.~Van~Cleemput,
On the number of hamiltonian cycles in triangulations with few separating triangles,
{\it J. Graph Theory} {\bf 87} (2018), 164--175.

\bibitem{BM2020} J.I.~Brown and L.~Mol,
On the roots of the subtree polynomial,
{\it European J. Combin.} {\bf 89} (2020), \#103181.

\bibitem{CG1966} D.~Cartwright and T.C.~Gleason,
The number of paths and cycles in a digraph,
{\it Psychometrika} {\bf 31} (1966), 179--199.

\bibitem{Dirac} G.A.~Dirac,
A property of 4-chromatic graphs and some remarks on critical graphs, 
{\it J. London Math. Soc.} {\bf 27} (1952), 85--92.

\bibitem{DKPT2015} M.~Dod, T.~Kotek, J.~Preen and P.~Tittmann,
Bipartition polynomials, the Ising model and domination in graphs,
{\it Discuss. Math. Graph Theory} {\bf 35} (2015), 335--353.

\bibitem{EMOT2016} M.N.~Ellingham, E.A.~Marshall, K.~Ozeki and S.~Tsuchiya,
A characterization of $K_{2,4}$-minor-free graphs,
{\it SIAM J. Discrete Math.} {\bf 30} (2016), 955--975.

%\bibitem{Epps1992} D.~Eppstein,
%Parallel recognition of series-parallel graphs,
%{\it Inform. and Comput.} {\bf 98} (1992), 41--55.

\bibitem{Farrell1979} E.J.~Farrell,
On a general class of graph polynomials,
{\it J. Combin. Theory Ser. B} {\bf 26} (1979), 111--122.

\bibitem{Gould_survey} R.J.~Gould
Advances on the Hamiltonian problem: a survey
{\it Graphs Combin.} {\bf 19} (2003), 7--52.

\bibitem{HM1971} F.~Harary and B.~Manvel,
On the number of cycles in a graph,
{\it Matematick\'y \v{c}asopis} {\bf 21} (1971), 55--63.

%\bibitem{HP1959} F.~Harary and G.~Prins,
%The number of homeomorphically irreducible trees, and other species,
%{\it Acta Math.} {\bf 101} (1959), 141--162.

\bibitem{Hay2017} M.~Haythorpe,
On the minimum number of hamiltonian cycles in regular graphs,
{\it Exp. Math.} {\bf 27} (2018), 426--430.

\bibitem{Jami1983} R.E.~Jamison,
On the average number of nodes in a subtree of a tree,
{\it J. Combin. Theory Ser. B} {\bf 35} (1983), 207--223.

\bibitem{Jami1984} R.E.~Jamison,
Monotonicity of the mean order of subtrees,
{\it J. Combin. Theory Ser. B} {\bf 37} (1984), 70--78.

\bibitem{Jami1987} R.E.~Jamison,
Alternating whitney sums and matchings in trees, part~1,
{\it Discrete Math.} {\bf 67} (1987), 177--189.

\bibitem{Jami1990} R.E.~Jamison,
Alternating whitney sums and matchings in trees, part~2,
{\it Discrete Math.} {\bf 79} (1990), 177--189.

\bibitem{Karp1972} R.M.~Karp,
Reducibility among combinatorial problems,
{\it Complexity of computer computations}, Springer (1972), 85--103.

\bibitem{KG1972} N.P.~Khomenko and L.D.~Golovko,
Identifying certain types of parts of a graph and computing their number,
{\it Ukrainian Math. J.} {\bf 24} (1972), 313--321.

\bibitem{KO_survey} D.~K\"uhn and D.~Osthus,
A survey on Hamilton cycles in directed graphs,
{\it European J. Comb.} {\bf 33} (2012), 750--766.

\bibitem{Oz_survey} K.~Ozeki, N.~Van~Cleemput, C.T.~Zamfirescu,
Hamiltonian properties of polyhedra with few 3-cuts -- A survey,
{\it Discrete Math.} {\bf 341} (2018), 2646--2660.

\bibitem{RS2005} D.~Rautenbach and I.~Stella,
On the maximum number of cycles in a Hamiltonian graph,
{\it Discrete Math.} {\bf 304} (2005), 101--107.

%\bibitem{Riordan1957} J.~Riordan,
%The numbers of labeled colored and chromatic trees,
%{\it Acta Math.} {\bf 97} (1957), 211--225.

\bibitem{polynomial_book} Y.~Shi, M.~Dehmer, X.~Li, I.~Gutman,
Graph Polynomials,
{\it CRC Press} (2016).

%\bibitem{SW2005-1} L.A.~Sz\'ekely and H.~Wang,
%On subtrees of trees, 
%{\it Adv. Appl. Math.} {\bf 34} (2005), 138--155.

%\bibitem{SW2005-2} L.A.~Sz\'ekely and H.~Wang,
%Binary trees with the largest number of subtrees with at least one leaf, 
%{\it Congr. Numer.} {\bf 177} (2005), 147--169.

%\bibitem{SW2007} L.A.~Sz\'ekely and H.~Wang,
%Binary trees with the largest number of subtrees,
%{\it Discrete Appl. Math.} {\bf 155} (2007), 374--385.

\bibitem{Thomassen1996} C.~Thomassen,
On the number of Hamiltonian cycles in bipartite graphs,
{\it Combin. Probab. Comput.} {\bf 5} (1996), 437--442.

\bibitem{Tutte1954} W.T.~Tutte,
A contribution to the theory of chromatic polynomials,
{\it Canad. J. Math.} {\bf 6} (1954), 80--91.

\bibitem{Vija2009} T.C.~Vijayaraghavan,
Checking equality of matroid linear representations and the cycle matching problem,
{\it Electron. Colloquium Comput. Complex.} {\bf 16} (2009), \#9.

\bibitem{VW2010} A.~Vince and H.~Wang,
The average order of a subtree of a tree,
{\it J. Combin. Theory Ser. B} {\bf 100} (2010), 161--170.

\bibitem{Volk1996} L.~Volkmann,
Estimations for the number of cycles in a graph,
{\it Period. Math. Hungar.} {\bf 33} (1996), 153--161.

\bibitem{Wagner} K.~Wagner,
\"Uber eine Eigenschaft der ebenen Komplexe, 
{\it Math. Ann.} {\bf 114} (1937), 570--590.

\end{thebibliography}
\end{document}